 \newcommand{\Z}{{\mathbb Z}}
 \newcommand{\C}{{\mathbb C}}
 \newcommand{\A}{{\mathbb A}}
\newcommand{\Q}{{\mathbb Q}}
 \newcommand{\N}{{\mathbb N}}
\newcommand{\T}{{\mathbb T}}
\newcommand{\cH}{\mathcal H}
\newcommand{\p}{\partial}
\newcommand{\ve}{\varepsilon}
 \newtheorem{theorem}{Theorem}
 \newtheorem{lemma}[theorem]{Lemma}
  \newtheorem{proposition}[theorem]{Proposition}
 \newtheorem{corollary}[theorem]{Corollary}
 \newtheorem{remark}[theorem]{Remark}
 \newtheorem{example}[theorem]{Example}
 \newtheorem{conjecture}[theorem]{Conjecture}
\newcommand{\g}{\mathfrak g}
\newcommand{\h}{\mathfrak h}
\def\Box
\thinspace\vbox{\hrule height .5pt \hbox{\vrule
   width .5pt \vbox to 7pt{\hbox to 3.5pt{}} \vrule width .5pt}
   \hrule height 0pt depth .5pt}}
\title{Quantizations of Character Varieties and Quantum Knot Invariants}
\author{Adam S. Sikora}
\keywords{quantum invariant, knot, character variety, Goldman bracket, $q$-holonomic function, recursive ideal, quantization, skein module, quantum Weyl algebra}
\subjclass{57M27,53D55,46L65}
\date{}
\begin{document}
\thispagestyle{empty}

\begin{abstract}
Let $G$ be a simple complex algebraic group and $\g$ its Lie algebra.
We show that the $\g$-Witten-Reshetikhin-Turaev quantum invariants
determine a deformation-quantization, $\C_q[X_G(\text{torus})],$
of the coordinate ring of the $G$-character variety of the torus.
We prove that this deformation is in the direction of the Goldman's bracket.
Furthermore, we show that every knot $K\subset S^3$ defines an ideal $I_K$ in $\C_q[X_G(\text{torus})].$
We conjecture that the homomorphism $\ve: \C_q[X_G(\text{torus})]\to \C[X_G(\text{torus})],$
$q\to 1,$ maps $I_K$ to the ideal whose radical is the kernel of the map
$\C[X_G(\text{torus})]\to \C[X_G(S^3\setminus K)].$
This conjecture is related to AJ-conjecture for $sl(2,\C)$.
The results of this paper are inspired by the theory of q-holonomic relations between quantum
invariants of Garoufalidis and Le. Along the way, we disprove Conjecture 2 in \cite{LeTB}.
\end{abstract}

\maketitle
\pagestyle{myheadings}
\markboth{\hfil{\sc ADAM S. SIKORA}\hfil}
{\hfil{\sc QUANTIZATIONS OF CHARACTER VARIETIES}\hfil}

%%%%%%%%%%%%%%%%%%%%%%%%%%%%%%%%%%%%%%%%%%%%%%%%%%%%%%%%%%%%%%%%
%
\section{Statements of theorems and conjectures}
%
%%%%%%%%%%%%%%%%%%%%%%%%%%%%%%%%%%%%%%%%%%%%%%%%%%%%%%%%%%%%%%%%%

\subsection{Quantum link invariants}
For a simple complex Lie algebra $\g$, denote by $WRT_{\g,V}(L)$ the $(\g,V)$-Witten-Reshetikhin-Turaev invariant of a framed oriented link $L\subset S^3$ whose all components are labeled by a finite dimensional representation $V,$ \cite{RT}.\footnote{$\g$-quantum invariants are also called by the
compact Lie group corresponding to the real compact form of $\g.$ For example $sl(n,\C)$-quantum
invariants are also called $SU(n)$-quantum invariants.}
It is a polynomial in  $q^{\pm \frac{1}{2D(\g)}},$
where $D(\g)$ is the smallest positive integer such that the dual Killing form, $(\cdot,\cdot):
\Lambda_\g^*\times \Lambda_\g^* \to \Q,$ has all its values in $\frac{1}{D(\g)}\Z$, c.f. \cite{LeQI}.
($\Lambda_\g$ denotes the weight lattice of $\g.$
The form is normalized so that $(\lambda,\lambda)=2$ for all short roots $\lambda$.)
\begin{equation}\label{Dg}
D(\g)=\begin{cases}
n & \text{for}\ \g=sl(n)\\
1 & \text{for}\ \g=sp(2n),so(4n+1), E_8,F_4,G_2\\
2 & \text{for}\ \g=so(4n),so(4n+3), E_7\\
3 & \text{for}\ \g=E_6,\\
4 & \text{for}\ \g=so(4n+2).\\
\end{cases}
\end{equation}

Given a Cartan subalgebra $\h\subset \g$ and fixed positive roots of $\g$, each finite dimensional
irreducible representation of $\g$ is determined by its highest weight $\lambda.$
We denote that representation by $V(\lambda).$
Then $\lambda \to WRT_{\g,V(\lambda)}(L)$ is a function defined on the
set of all dominant weights.

\subsection{Extension to Verma modules}
For the purpose of relating Witten-Reshetikhin-Turaev invariants of links to the topology of their
complements, we need to extend the above function to the entire weight lattice of $\g.$
However, if $\lambda$ is not dominant then all representations of $\g$ with highest
weight $\lambda$ are infinite dimensional. In fact, each $\lambda\in \Lambda_\g$ defines the Verma module
$M(\lambda)$ which is an infinite-dimensional indecomposable $\g$-module of highest weight $\lambda$
with the universal property that each indecomposable $\g$-module of highest weight $\lambda$ is a quotient of $M(\lambda).$
Rozanski (for $sl(2)$, \cite{Ro}) and Le (for all $\g$) made the following surprising observation:
Reshetikhin-Turaev construction of quantum invariants for knots (but not links) extends verbatim for all Verma modules of $\g$. (Details in Sec. \ref{s_WRT}.)
Furthermore,
\begin{equation}\label{M=V}
WRT_{\g,M(\lambda)}(K)=WRT_{\g,V(\lambda)}(K),
\end{equation}
for all dominant weights $\lambda.$
Let
\begin{equation}\label{WRT_J}
J_{\g,K}: \Lambda_\g\to \C[q^{\pm 1/D(\g)}],\quad J_{\g,K}(\lambda)=WRT_{\g,M(\lambda-\rho)}(K),
\end{equation}
where $\rho$ is the half-sum of positive roots of $\g.$ We call it
{\em the $\g$-Witten-Reshetikhin-Turaev function of $K$.} (The motivation for the shift by $\rho$
comes from Proposition \ref{J-sym}.)

\begin{example} For $\g=sl(2),$ $\rho=1\in \Lambda_\g=\Z.$
$J_{\g,K}(0)=0,$ $J_{\g,K}(1)=1,$ and $J_{\g,K}(2)$ for a zero-framed knot $K$ is the Jones polynomial of $K.$
More generally, $J_{\g,K}(n)$ is the Jones polynomial of $K$ colored by
the $n$-dimensional representation for $n\geq 1$ and
$J_{\g,K}(n)=-J_{\g,K}(-n)$ for negative $n.$
\end{example}

\begin{example}
It follows from \cite[1.4.4]{LeQI} that
\begin{equation}\label{unknot}
J_{\g,U}(\lambda)= \frac{\sum_{w\in W} sgn(w)q^{(\lambda,w(\rho))}}
{\sum_{w\in W} sgn(w)q^{(\rho,w(\rho))}},
\end{equation}
for the unknot $U$ and for every $\lambda\in \Lambda_\g.$ The sum is over the Weyl algebra of $\g.$
\end{example}

%The goal of this paper is to explore the properties of WRT-functions $J_{\g,K}.$
We are going to see that $J_{\g,K}$ has nice algebraic properties and it
encodes the $\g$-quantum invariants of $K$ in a form which is very convenient for the purpose of relating
them to the topology of $S^3\setminus K$.

\subsection{q-holonomicity}\label{ss_q-holon}
The next statement follows immediately from the argument of the proof of \cite[Thm 6]{GL1} --
see comments in Sec. \ref{s_WRT}.

\begin{theorem}\label{q-holon}
$J_{\g,K}$ a $q$-holonomic function on $\Lambda_g$ for all $\g\ne G_2.$
\end{theorem}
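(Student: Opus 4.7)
The plan is to follow the argument of Garoufalidis-Le \cite{GL1} for the colored Jones polynomial of $sl(2)$, with the $sl(2)$-Verma modules replaced by the Verma modules $M(\lambda - \rho)$ of general $\g$. The strategy has three pieces: a Reshetikhin-Turaev state-sum expression of $J_{\g,K}(\lambda)$ as a finite multiple sum over a $(1,1)$-tangle diagram of $K$; $q$-holonomicity of each elementary building block (R-matrix entries and pivotal coefficients) as a function of $\lambda$ together with the state-sum indices; and stability of $q$-holonomicity under finite products and under summation over a subset of variables (Sabbah's theorem, as used in \cite{GL1}).

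First I would fix a $(1,1)$-tangle presentation of $K$, which is the natural setting for Verma-module invariants since such a tangle yields a scalar acting on the one-dimensional highest-weight line of $M(\lambda-\rho)$. Expanding the state on each edge in the PBW basis of $M(\lambda-\rho)$, indexed by tuples of nonnegative integers (one per positive root of $\g$), one writes the invariant as a finite sum over edge labels, with each crossing contributing a matrix entry of the universal $R$-matrix and each cap/cup contributing a pivotal factor built from $\lambda$ and $\rho$; finiteness of the sum for fixed external input comes from the weight grading. Next, the Khoroshkin-Tolstoy product formula for the universal $R$-matrix exhibits each such matrix entry as a product of $q$-exponential and $q$-Pochhammer factors whose arguments are linear in $\lambda$ and in the basis indices, so each entry is $q$-proper hypergeometric in the sense of Wilf-Zeilberger, hence $q$-holonomic. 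The pivotal factors, being products of powers of $q$ and ratios of $q$-factorials in Weyl-orbit expressions (compare \eqref{unknot}), are $q$-holonomic as well. Combining these with the closure properties of $q$-holonomic functions then yields $q$-holonomicity of $J_{\g,K}(\lambda)$.

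The main obstacle — and the reason for the exclusion of $G_2$ — is verifying the $q$-proper hypergeometric form of the $R$-matrix entries in $\lambda$, uniformly in the basis indices. For the classical series $A_n, B_n, C_n, D_n$ and for the exceptionals $E_6, E_7, E_8, F_4$, the Khoroshkin-Tolstoy formula together with the structure of the quantized Serre relations makes this transparent. The $G_2$ case is sensitive because the combinatorics tied to the triply-laced edge of its Dynkin diagram produces an additional factor in the $R$-matrix whose $q$-holonomicity in $\lambda$ does not follow from the existing arguments; handling it would require either a finer analysis of the $G_2$ PBW structure or a mild extension of the $q$-holonomic framework beyond the $q$-proper hypergeometric class.
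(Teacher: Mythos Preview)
Your proposal is correct and takes essentially the same approach as the paper: both defer to the Garoufalidis--Le argument in \cite{GL1} (state-sum over a $(1,1)$-tangle, Khoroshkin--Tolstoy $R$-matrix entries being $q$-proper hypergeometric in the weight and summation indices, closure of $q$-holonomic functions under products and sums via Sabbah's theorem), with the $G_2$ exclusion inherited from the $R$-matrix analysis there. The only cosmetic difference is that the paper factors $J_{\g,K}(\lambda)=J_{\g,U}(\lambda)\cdot WRT_{\g,M(\lambda-\rho)}(K')$, cites \cite[Thm.~8]{GL1} for $q$-holonomicity of the tangle factor, and then invokes closure under multiplication, whereas you treat the full invariant in one pass; the content is the same.
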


In order to define $q$-holonomicity of $J_{\g,K}$, consider the $\C[q^{\pm 1/D(\g)}]$-vector space
$F(\Lambda_g,\C[q^{\pm 1/D(\g)}])$ of all
$\C[q^{\pm 1/D(\g)}]$-valued functions on $\Lambda_\g$ and consider two families of operators on it:
$$E_\alpha f(\beta)=f(\alpha+\beta),\quad Q_\alpha f(\beta)=q^{(\alpha,\beta)}f(\beta),$$
for all $\alpha, \beta\in \Lambda_\g.$
Let $\A_g$ be the algebra of $\C[q^{\pm 1/D(\g)}]$-linear endomorphisms of $F(\Lambda_g,\C[q^{\pm 1/D(\g)}])$
generated by $E_\alpha$'s and $Q_\alpha$'s for $\alpha\in \Lambda_\g.$

\begin{proposition}\label{presentation}
$\A_\g$ is the $\C[q^{\pm 1/D(\g)}]$-algebra of polynomials in
non-commuting variables $E_\alpha,Q_\beta,$ $\alpha,\beta \in \Lambda_\g,$ subject to conditions:
\begin{equation}\label{qWeyl_rel}
E_\alpha E_\beta=E_{\alpha+\beta},\quad Q_\alpha Q_\beta=Q_{\alpha+\beta},\quad
E_\alpha Q_\beta=q^{(\alpha,\beta)}Q_\alpha E_\beta, \quad E_0=Q_0=1.
\end{equation}
\end{proposition}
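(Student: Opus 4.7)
The plan is to verify that $E_\alpha, Q_\alpha$ satisfy the relations (\ref{qWeyl_rel}), producing a surjection $B \twoheadrightarrow \A_\g$ where $B$ denotes the abstract $\C[q^{\pm 1/D(\g)}]$-algebra on the given generators modulo the given relations, and then to prove injectivity by extracting a normal form for elements of $B$ and establishing linear independence of the normal-form monomials inside $\A_\g$.

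First, I would check each relation in (\ref{qWeyl_rel}) directly by applying both sides to an arbitrary $f \in F(\Lambda_\g,\C[q^{\pm 1/D(\g)}])$ and evaluating at $\gamma \in \Lambda_\g$; every identity reduces to an elementary manipulation of the defining formulas, giving the surjection $B \twoheadrightarrow \A_\g$. Next, I would use (\ref{qWeyl_rel}) to rewrite any word in the generators of $B$ as a single normal-form monomial $Q_\beta E_\alpha$: the commutation relation pushes each $E$-factor past each $Q$-factor at the cost of a scalar, after which the additivity relations collapse the remaining $Q$-string and $E$-string into single generators. Hence $B$ is spanned over $\C[q^{\pm 1/D(\g)}]$ by $\{Q_\beta E_\alpha : \alpha, \beta \in \Lambda_\g\}$.

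The main step, and the one requiring the most care, is to prove $\C[q^{\pm 1/D(\g)}]$-linear independence of these monomials as elements of $\A_\g$, for then the surjection above is an isomorphism. Suppose $\sum_{i=1}^n c_i(q)\, Q_{\beta_i} E_{\alpha_i} = 0$ as an operator, with pairs $(\alpha_i,\beta_i)$ pairwise distinct. Applying this to the delta function $\delta_\mu$ (value $1$ at $\mu \in \Lambda_\g$, $0$ elsewhere) and evaluating at $\gamma \in \Lambda_\g$ yields
\[
\sum_{i:\,\alpha_i = \mu - \gamma} c_i(q)\, q^{(\beta_i,\gamma)} = 0.
\]
Fixing $\alpha \in \Lambda_\g$, setting $\mu = \gamma + \alpha$, and letting $\gamma$ vary over $\Lambda_\g$, one extracts the relation $\sum_{i:\,\alpha_i = \alpha} c_i(q)\, q^{(\beta_i,\gamma)} = 0$ valid for every $\gamma$. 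Because $(\cdot,\cdot)$ is non-degenerate on $\Lambda_\g$ (standard for the weight lattice of a simple $\g$), the characters $\gamma \mapsto q^{(\beta_i,\gamma)}$ are pairwise distinct for distinct $\beta_i$; Artin's theorem on independence of characters — applied over the fraction field $\C(q^{1/D(\g)})$, since $\C[q^{\pm 1/D(\g)}]$ is not itself a field — then forces each $c_i(q)=0$, completing the argument.
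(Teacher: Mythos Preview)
Your proof is correct and follows the same overall architecture as the paper's (verify the relations, reduce every word to a normal form $Q_\beta E_\alpha$, then prove linear independence of these monomials as operators), but your linear-independence step takes a genuinely different route. The paper argues by hand: it picks a pair $(\alpha_0,\beta_0)$ with nonzero coefficient maximizing certain Killing norms, constructs a test function supported on the ray $\{kN\alpha_0 : k\in\Z\}$ for a suitably large integer $N$, evaluates the relation at $-\beta_0+kN\alpha_0$, and obtains a contradiction by tracking the leading power of $q$ as $k\to\infty$. You instead use delta functions $\delta_\mu$ to isolate, for each fixed $\alpha$, the subrelation $\sum_{i:\alpha_i=\alpha} c_i(q)\,q^{(\beta_i,\gamma)}=0$ for all $\gamma\in\Lambda_\g$, and then appeal to Artin's independence of characters from $\Lambda_\g$ into the multiplicative group of $\C(q^{1/D(\g)})$. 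Your argument is cleaner and makes the separation of the $E$- and $Q$-parts transparent; the paper's argument is more self-contained in that it stays over $\C[q^{\pm 1/D(\g)}]$ and uses only an explicit leading-term estimate rather than an external theorem.
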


We call $\A_\g$ the quantum Weyl algebra of $\g.$
$\A_{sl(2)}$ is the $q$-Weyl algebra of \cite{EO} and $q$-torus algebra
of \cite{GL1}:
$$\A_{sl(2)}=\C\langle E_1^{\pm 1},Q_1^{\pm 1}\rangle/E_1Q_1-q^{1/2}Q_1E_1.$$
(Recall that our dual Killing form is normalized so that $(\alpha,\alpha)=2$ for all short roots
$\alpha.$ For $sl(2,\C),$ $\alpha=\pm 2\in \Z=\Lambda_{\sl(2,\C)}.$ Hence $(1,1)=\frac{1}{2}.$
Therefore, our $q$ is $q^2$ in \cite{Ga1,Ga2,GL1}.)
For other $\g,$ $\A_\g$ appears to be different from the $q$-torus algebra
of \cite{GL1} and from other "quantum Weyl algebras" appearing for example in
\cite{DP, Gi, Ha, JZ, Ma, Pa, Ri}.

For any $f:\Lambda_\g\to \C[q^{\pm 1/D(\g)}]$ the set
$$I_f=\{P\in \A_\g: Pf=0\}\subset \A_\g$$
is a left-sided ideal in $\A_\g$ called the {\em recursive ideal} of $f,$ c.f. \cite{Ga1}.
This term reflects the fact that each element of $I_f$ represents a recursive relation for $f.$
Function $f$ is $q$-holonomic iff $I_f$ is q-holonomic, which intuitively means "as large as possible"
(and, in particular, non-trivial).
More precisely, a left ideal $I\triangleleft \A_\g$ is q-holonomic
if its homological codimension is at least the rank of $\g,$ that is
\begin{equation}\label{hcd}
hcd(I)=min\{j: Ext^j_{\A_\g}(\A_\g/I,\A_\g)\ne 0\}\geq rank\, \g.
\end{equation}
This definition is a modification of that of \cite{GL1} to functions defined on $\Lambda_\g$ rather than
on $\N^n.$ We denote the recursive ideal of $J_{\g,K}$ by $I_{\g,K}.$

\subsection{The action of the Weyl group}\label{ss_Weyl_act}
%The proof of the following statement was suggested to us by Thang Le.
$J_{\g,K}(\lambda)$ is equivariant with respect to the Weyl group $W$ action:

\begin{proposition}\label{J-sym}(Proof in Sec. \ref{s_WRT})
$$J_{\g,K}(w\cdot \lambda)=sgn(w)\cdot J_{\g,K}(\lambda),$$
where $sgn(w)=\pm 1$ is the sign of $w\in W.$
\end{proposition}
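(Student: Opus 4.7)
The plan is to factor $WRT_{\g,M(\mu)}(K)$ as a product of a ``central-character piece,'' depending only on the central character of $M(\mu)$, and the WRT invariant of the unknot, and then to transfer the Weyl anti-invariance from the unknot (where it is visible in (\ref{unknot})) to an arbitrary knot $K$.

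First I would present $K$ as the closure of a $(1,1)$-tangle $T_K$. In the Le-Rozansky construction discussed in Sec.~\ref{s_WRT}, $T_K$ induces a $U_q(\g)$-linear endomorphism $\Phi_K(\mu)$ of $M(\mu)$, rational in the exponentials $q^{(\mu,\alpha)}$. For $\mu$ in a Zariski-dense subset of the weight space, $M(\mu)$ is simple and Schur's lemma yields $\Phi_K(\mu)=z_K(\mu)\cdot\mathrm{id}$. Closing the $(1,1)$-tangle then gives the factorization
\[
WRT_{\g,M(\mu)}(K)=z_K(\mu)\cdot WRT_{\g,M(\mu)}(U).
\]

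The crucial step is to show that $z_K$ is constant on shifted Weyl orbits, i.e.\ $z_K(w\cdot\mu)=z_K(\mu)$ for $w\cdot\mu := w(\mu+\rho)-\rho$. The cleanest route is via a universal knot invariant: the $(1,1)$-tangle operator, before specialization to a module, can be arranged to lie in the center of a suitable completion of $U_q(\g)$, so its scalar action on a Verma module depends only on the central character $\chi_\mu$; the Harish-Chandra isomorphism for $U_q(\g)$ gives $\chi_{w\cdot\mu}=\chi_\mu$, forcing $z_K$ to descend to the Harish-Chandra quotient. An alternative, more hands-on approach uses naturality: whenever a $U_q(\g)$-embedding $M(w\cdot\mu)\hookrightarrow M(\mu)$ exists — which happens on a Zariski-dense locus by the quantum BGG theorem — one obtains $z_K(w\cdot\mu)=z_K(\mu)$ on that locus and then extends by rational continuation in $\mu$.

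To conclude, I would verify the unknot case by direct computation: substituting $v'=w^{-1}v$ in the Weyl sum of (\ref{unknot}) and invoking $W$-invariance of the pairing yields $J_{\g,U}(w\lambda)=sgn(w)\cdot J_{\g,U}(\lambda)$. Setting $\mu=\lambda-\rho$ (so that $w\cdot\mu+\rho=w\lambda$), we obtain
\[
J_{\g,K}(w\lambda)=z_K(w\cdot\mu)\cdot J_{\g,U}(w\lambda)=z_K(\mu)\cdot sgn(w)\cdot J_{\g,U}(\lambda)=sgn(w)\cdot J_{\g,K}(\lambda),
\]
which is the claim. The main obstacle is the central-character step: rigorously identifying $z_K(\mu)$ with the evaluation of a central element of (a completion of) $U_q(\g)$ at $\chi_\mu$. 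Depending on how Sec.~\ref{s_WRT} formalizes the extension of Reshetikhin-Turaev to Verma modules, this may require invoking a Habiro-style universal knot invariant or, failing that, a careful analysis of $(1,1)$-tangle naturality along BGG embeddings followed by analytic continuation in $\mu$. The remaining ingredients — Schur's lemma, the unknot computation, and the $\rho$-shift — are essentially bookkeeping.
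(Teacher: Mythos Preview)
Your strategy is the paper's strategy: factor $J_{\g,K}(\lambda)=J_{\g,U}(\lambda)\cdot WRT_{\g,M(\lambda-\rho)}(K')$, read off the sign from the Weyl-numerator formula (\ref{unknot}) for the unknot, and show the open-tangle scalar $z_K(\mu)=WRT_{\g,M(\mu)}(K')$ is constant on shifted $W$-orbits. The paper uses exactly your Route~2 (Verma embeddings), but more cleanly. You invoke Schur's lemma only for simple $M(\mu)$ and then patch by rational continuation; this detour is unnecessary, since every Verma module is cyclic on a one-dimensional highest-weight space, so $End_{U_h(\g)}(M(\mu))$ already consists of scalars for \emph{all} $\mu$. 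Likewise the embedding $M(\lambda-\rho)\hookrightarrow M(w\cdot\lambda-\rho)$ is asserted for every integral $\lambda$ and every Bruhat-positive $w$ (the paper cites Knapp), so naturality of the tangle operator gives $z_K(\lambda-\rho)=z_K(w\cdot\lambda-\rho)$ outright, with no Zariski-density or analytic-continuation step. Your Route~1 via a universal central invariant and the quantum Harish--Chandra isomorphism would also succeed, but is heavier than what the paper actually uses.
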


In particular, $J_{\g,K}(\lambda)$ vanishes for weights $\lambda$ in the boundaries of Weyl chambers.

The Weyl group acts on $F(\Lambda_g,\C[q^{\pm 1/D(\g)}])$ by $w\cdot f(\alpha)=f(w^{-1}\cdot \alpha).$
(The inverse is needed to make sure that this is a left action.)
Additionally, $W$ acts on $\A_g$ via $w\cdot E_\alpha=E_{w\cdot \alpha},$
$w\cdot Q_\alpha=Q_{w\cdot \alpha},$ and the product
$A_\g\times F(\Lambda_g,\C[q^{\pm 1/D(\g)}])\to F(\Lambda_g,\C[q^{\pm 1/D(\g)}])$
is $W$-equivariant. Furthermore, by Proposition \ref{J-sym},
$w\cdot I_{\g,K}=I_{\g,K},$
for every $w\in W.$ We call the $W$-invariant part of the recursive ideal,
$I_{\g,K}^W\triangleleft \A_\g^W,$
the invariant $\g$-recursive ideal of $K.$

%The invariant subalgebra, $\A_\g^W,$ is
%{\em the invariant quantum Weyl algebra of $\g.$}

\begin{proposition}\label{inv_rec}(Proof in Sec. \ref{proof_inv_rec})
(1) For the unknot $U,$ $$I_{sl(2,\C),U}^W=\langle E+E-(q^{1/2}+q^{-1/2}, EQ+E^{-1}Q^{-1}-q(Q+Q^{-1})\rangle.$$
(2) For the left-sided trefoil, $I_{sl(2,\C),K}^W$ is generated by elements $$q^{5/4}(EQ^{-5}+E^{-1}Q^5)-q^{-7/4}(EQ^{-1}+E^{-1}Q)-q^{-3/4}(Q^5+Q^{-5})+q^{1/4}(Q+Q^{-1}),$$
$q^3(E^2Q^{-6}+E^{-2}Q^6)+(q^{3/2}+q^{-3/2})(E+E^{-1})-(q^{1/3}+q^{-5/2})(EQ^{-6}+E^{-1}Q^6)+(Q^6+Q^{-6})
-2(q+q^{-1}),$\\
$-q^{-7/2}(E^2Q^{-7}+E^{-2}Q^7)+q^{-3}(EQ^{-7}+E^{-1}Q^7)+(q^{-2}-q^{-1})(EQ^{-3}+E^{-1}Q^3)-
q(EQ^{-1}+E^{-1}Q)-(q^{1/2}-q^{-1/2})(Q^3+Q^{-3})+q^{-3/2}(Q+Q^{-1}).$\\
(3) The generators of the invariant recursive ideal of the right-handed trefoil are
obtained from those above after substitution $Q\to Q^{-1},$ $q\to q^{-1}.$
\end{proposition}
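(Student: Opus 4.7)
The plan is to split the proof of Proposition \ref{inv_rec} into two stages: first verify that each listed element is $W$-invariant and annihilates the corresponding $J_{sl(2),K}$, then show they generate the entire invariant recursive ideal.

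For the first stage, $W$-invariance under $(E,Q)\mapsto (E^{-1},Q^{-1})$ is immediate by inspection of each displayed element. For the unknot, formula (\ref{unknot}) with $W=\{\pm 1\}$, $\rho = 1$, and $(1,1) = 1/2$ yields
$$J_{sl(2),U}(\lambda) = \frac{q^{\lambda/2}-q^{-\lambda/2}}{q^{1/2}-q^{-1/2}},$$
and direct substitution using $Ef(\lambda) = f(\lambda+1)$ and $Qf(\lambda) = q^{\lambda/2}f(\lambda)$ confirms that both listed elements annihilate it. For the left-handed trefoil $K$, I would feed a closed formula for $J_{sl(2),K}$ (e.g.\ the Habiro/Masbaum cyclotomic expansion, or a state-sum from the $R$-matrix on the standard two-bridge diagram) into each of the three stated elements and verify annihilation by a telescoping difference-equation identity on the summands. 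Part (3) then reduces to part (2) via the mirror-knot identity $J_{\g,K^*}(\lambda) = J_{\g,K}(\lambda)|_{q\to q^{-1}}$: the substitution $q\to q^{-1}$ converts the operator $Q$ (at $q$) into $Q^{-1}$ (at $q^{-1}$), so annihilators of $J_{sl(2),K}$ are carried to annihilators of $J_{sl(2),K^*}$ by the prescribed $q\to q^{-1}$, $Q\to Q^{-1}$.

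The second stage, showing the elements actually generate $I_{sl(2),K}^W$, is the main work. My approach is to first find a presentation of the non-invariant recursive ideal $I_{sl(2),K}$ in $\A_{sl(2)}$ and then extract its $W$-invariant part. For the unknot, $J_U$ lies in the two-dimensional $\C[q^{\pm 1/2}]$-span of the $E$-eigenfunctions $q^{\pm\lambda/2}$, so $I_{sl(2),U}$ has a clean presentation; combined with the fact that $\A_{sl(2)}^W$ is generated (subject to the quantum torus relations of Proposition \ref{presentation}) by orbit sums of the form $E^iQ^j + E^{-i}Q^{-j}$, a direct calculation produces the two listed generators. For the trefoil, the non-invariant recursion is available from the AJ-conjecture literature (Garoufalidis--Le, Gelca), and the same extraction applies, with considerably more bookkeeping.

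The main obstacle is completeness in the trefoil case, that is, verifying that the three stated elements really generate $I_{sl(2),K}^W$ and not a proper sub-ideal. This calls for a Gr\"obner-style reduction inside $\A_{sl(2)}^W$, adapted to the quantum Weyl algebra, with a careful degree argument in the $(E,Q)$-bidegree showing that every $W$-invariant annihilator of $J_{sl(2),K}$ reduces modulo the three listed elements to zero. I expect this reduction step to be where most of the effort in Section \ref{proof_inv_rec} will go.
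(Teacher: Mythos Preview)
Your approach is genuinely different from the paper's, and the completeness stage carries a real gap.

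The paper does not verify annihilation or attempt any Gr\"obner-style reduction inside $\A_{sl(2)}^W$. Instead, it invokes two structural identifications: by \cite{FG,Sa}, $\A_{sl(2,\C)}^W$ is isomorphic to the Kauffman bracket skein algebra of the torus, and by \cite{Ga1}, under this isomorphism $I_{sl(2,\C),K}^W$ corresponds to the orthogonal (peripheral) ideal of $K$. The orthogonal ideals of the unknot and of the trefoil were already computed in the skein literature (\cite{FG} for the unknot, \cite{Ge} for the trefoil). The proof in Section~\ref{proof_inv_rec} then consists only of writing down the dictionary ($(p,q)$-curve $\leftrightarrow (-1)^{p+q}q^{-pq/4}(E^pQ^q+E^{-p}Q^{-q})$, $t \leftrightarrow q^{1/4}$) and translating the known generators across it. No annihilation check, no degree argument, no reduction procedure is needed, because completeness is inherited from the cited computations.

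Your first stage (membership) is fine in principle, though for the trefoil it is already nontrivial bookkeeping. The real problem is your second stage. Passing from generators of the non-invariant ideal $I_{sl(2),K}\triangleleft \A_{sl(2)}$ to generators of $I_{sl(2),K}^W = I_{sl(2),K}\cap \A_{sl(2)}^W$ is not a routine extraction: intersecting an ideal with a fixed-ring is a genuine invariant-theory computation, and a Gr\"obner-type reduction in a noncommutative $q$-Weyl algebra adapted to the $W$-action would require you to set up and justify a term order, a normal form, and a termination argument from scratch. You anticipate that this is ``where most of the effort will go,'' but the paper sidesteps it entirely by importing the skein-module result, where the hard work (for the trefoil, the computation in \cite{Ge}) has already been done in a setting designed for it. Without either that identification or a fully specified reduction algorithm, your completeness argument remains a plan rather than a proof.
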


\begin{conjecture}
For every $\g$ and $K,$\\
(1) $J_{\g,K}$ is uniquely determined by a finite number of its values together with the recursive relations of $I_{\g,K}.$\\
(2) $J_{\g,K}$ is uniquely determined among $W$-equivariant functions
(i.e. functions satisfying the statement of Proposition \ref{J-sym})
by a finite number of its values together with the recursive relations of $I_{\g,K}^W.$
\end{conjecture}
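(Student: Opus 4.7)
\emph{Proof proposal.} For part (1), the natural plan is to upgrade the q-holonomicity of $I_{\g,K}$ (Theorem \ref{q-holon}) into an explicit finite reduction algorithm. Fixing a Bernstein-type filtration of $\A_\g$ by total degree in the generators $E_\alpha,Q_\beta$ and computing a standard (Gr\"obner-type) basis of $I_{\g,K}$ with respect to it should produce a finite ``staircase'' of leading monomials; since each $Q_\beta$ acts by a scalar on the evaluation at a fixed weight, this staircase translates into a finite set $S\subset\Lambda_\g$ of initial weights such that $J_{\g,K}(\lambda)$ for any $\lambda$ may be expressed as a $\C[q^{\pm 1/D(\g)}]$-linear combination of the values $J_{\g,K}(\mu)$, $\mu\in S$, with coefficients recovered from the chosen generators of $I_{\g,K}$.

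For part (2), I would work with the invariant subalgebra $\A_\g^W\subset\A_\g$ acting on the sign-isotypic component of $F(\Lambda_\g,\C[q^{\pm 1/D(\g)}])$, i.e.\ on the space of $W$-equivariant functions (those satisfying Proposition \ref{J-sym}). Because $I_{\g,K}^W=I_{\g,K}\cap\A_\g^W$ and one expects $\A_\g$ to be sufficiently well-behaved as an $\A_\g^W$-module (e.g.\ faithfully flat after a suitable localization), the left $\A_\g^W$-module $\A_\g^W/I_{\g,K}^W$ should inherit q-holonomicity from $\A_\g/I_{\g,K}$. Restricting to a single dominant Weyl chamber -- on which a $W$-equivariant function is uniquely determined by its interior values, vanishing being automatic on the walls -- one then applies part (1) with a correspondingly smaller staircase of initial data and propagates the result to all of $\Lambda_\g$ by Weyl equivariance.

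The main obstacle, already present in (1), is converting the homological definition \eqref{hcd} of q-holonomicity into a genuine finite-dimensional parametrization of the solution space. For $\g=sl(2)$ this is automatic, since a single recurrence in one shift variable determines $J_{\g,K}$ from finitely many initial values. In higher rank, q-holonomic modules can a priori have infinite-dimensional solution spaces on $\Z^r$, so one must combine the homological condition with auxiliary structure -- the polynomial growth of $J_{\g,K}(\lambda)$ coming from the Reshetikhin--Turaev construction for Verma modules, the vanishing forced by Proposition \ref{J-sym}, and the specific lattice geometry of $\Lambda_\g$ -- to extract honest finiteness. A clean framework would be a q-analog of the Kashiwara--Malgrange comparison theorem for discrete $D$-modules on $\Z^r$, which has not yet been developed at the level of generality needed here.
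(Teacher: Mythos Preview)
The statement you are attempting to prove is labelled a \emph{conjecture} in the paper; the paper offers no proof, and none is known. So there is nothing to compare your proposal against on the paper's side.

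As for the proposal itself, you have correctly identified the central difficulty and are candid about it in your final paragraph: q-holonomicity in the sense of \eqref{hcd} is a homological codimension condition, and in rank $\geq 2$ it does \emph{not} by itself imply that the solution space $\{f:\Lambda_\g\to\C[q^{\pm 1/D(\g)}]: I_{\g,K}\cdot f=0\}$ is finite-dimensional over $\C[q^{\pm 1/D(\g)}]$. The Gr\"obner/staircase picture you sketch in part (1) would work if one knew that the associated graded ideal cuts out a zero-dimensional variety in the $E$-directions, but q-holonomicity only guarantees half-dimensionality, and a half-dimensional characteristic variety can still allow infinitely many independent solutions on the full lattice. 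Your appeal to polynomial growth of $J_{\g,K}$ and to the Weyl-equivariance constraints is the right instinct for cutting this down, but neither has been turned into a rigorous finiteness argument; as you say, the needed discrete analogue of Kashiwara's comparison is not in the literature. For part (2), the claim that $\A_\g^W/I_{\g,K}^W$ inherits q-holonomicity from $\A_\g/I_{\g,K}$ via faithful flatness is plausible but also unproved in this setting, and would require a genuine argument about the module structure of $\A_\g$ over $\A_\g^W$.

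In short: your outline is a sensible plan of attack and honestly flags its own gaps, but it is not a proof, and the paper does not claim one either.
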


\subsection{$\A_\g^W$ is a quantization-deformation of the $G$-character variety of the torus.}
For a given complex reductive algebraic group $G$, denote the $G$-character variety of a finitely
generated (discrete) group $\Gamma$ by $X_G(\Gamma),$ c.f. Sec \ref{s_char_var}.
Additionally, denote the connected component of the trivial character in $X_G(\Gamma)$
by $X_G^0(\Gamma).$ (If $G$ is simply connected, for example if $G=GL(n,\C),SL(n,\C), Sp(n,\C),$
then the $G$-character variety is connected, \cite{Ric}, and hence $X_G^0(\Gamma)=X_G(\Gamma)$.)
We often abbreviate $X_G(\pi_1(Y))$ and $X_G^0(\pi_1(Y))$ by $X_G(Y)$ and $X_G^0(Y)$ for a topological
space $Y.$ Goldman proved that for any closed orientable surface $F,$ $X_G(F)$ is a singular
holomorphic symplectic manifold, \cite{Go1}, c.f. Sec. \ref{s_def_quant}.
This symplectic structure defines a Poisson bracket
on the space of holomorphic functions on $X_G^0(F)$ called the Goldman bracket.

\begin{theorem}\label{AgW_quant_s} (Precise statement in Cor. \ref{AgW_quant0} and Thm. \ref{AgW_quant})\\
(1) For every complex reductive algebraic group $G$ and its Lie algebra $\g,$
$\A_g^W$ is a deformation-quantization of $\C[X_G^0(\text{torus})].$\footnote{Note that this statement
implies in particular that up to an isomorphism $X_G^0(\text{torus})$ depends on the Lie algebra of
$G$ only.}\\
(2) For every classical group, $G=GL(n,\C),SL(n,\C),SO(n,\C),Sp(n,\C),$
this deformation-quantization is in the direction of the Goldman bracket.
\end{theorem}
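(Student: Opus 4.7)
The plan is to prove parts (1) and (2) separately. For (1), I would exhibit $\A_\g$ as a flat $\C[q^{\pm 1/D(\g)}]$-deformation of the commutative group algebra $\C[\Lambda_\g \oplus \Lambda_\g]$ and then take $W$-invariants on both sides; the classical limit of $\A_\g^W$ is identified with $\C[X_G^0(\text{torus})]$. For (2), I would compute the quantum commutator of two PBW basis elements explicitly, read off the induced Poisson bracket, and match it against Goldman's bracket on the torus, which has an especially clean form coming from the algebraic intersection pairing on $\pi_1(T^2)=\Z^2$.

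For part (1), the presentation of Proposition \ref{presentation} supports a standard Bergman diamond argument: every monomial in $\A_\g$ can be put into the normal form $E_\alpha Q_\beta$ using the $q$-commutation, and linear independence of the resulting basis follows from the faithful action on $F(\Lambda_\g,\C[q^{\pm 1/D(\g)}])$. Thus $\A_\g$ is free over $\C[q^{\pm 1/D(\g)}]$. Specializing $q=1$ makes $E$ and $Q$ commute, so $\A_\g/(q-1)\A_\g$ is the commutative group algebra of $\Lambda_\g\oplus \Lambda_\g$, i.e.\ $\C[T_G\times T_G]$ for $T_G$ the maximal torus of the simply connected group with Lie algebra $\g$. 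Since $W$ permutes the PBW basis diagonally on $\Lambda_\g\oplus \Lambda_\g$, the $W$-orbit sums furnish a free $\C[q^{\pm 1/D(\g)}]$-basis of $\A_\g^W$, so $\A_\g^W$ is flat and its $q=1$ specialization is $\C[T_G\times T_G]^W=\C[X_G^0(\text{torus})]$, by the standard description of the identity component of the character variety of $T^2$. For a non-simply-connected $G$ with the same Lie algebra, the isogeny $G_{sc}\to G$ identifies the identity components of $X_G(T^2)$ and $X_{G_{sc}}(T^2)$, explaining the $\g$-only dependence.

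For part (2), a direct computation from $E_\alpha Q_\beta=q^{(\alpha,\beta)}Q_\beta E_\alpha$ gives
\[ [E_{\alpha_1}Q_{\beta_1},\, E_{\alpha_2}Q_{\beta_2}]=\bigl(q^{-(\alpha_2,\beta_1)}-q^{-(\alpha_1,\beta_2)}\bigr)\,E_{\alpha_1+\alpha_2}Q_{\beta_1+\beta_2}, \]
whose leading coefficient in $(q-1)$ is $(\alpha_1,\beta_2)-(\alpha_2,\beta_1)$. Hence the induced Poisson bracket on $\C[T_G\times T_G]$ is
\[ \{t^{\alpha_1}s^{\beta_1},\, t^{\alpha_2}s^{\beta_2}\}=\bigl((\alpha_1,\beta_2)-(\alpha_2,\beta_1)\bigr)\,t^{\alpha_1+\alpha_2}s^{\beta_1+\beta_2}. \]
For a classical $G$, trace functions of fundamental representations on simple closed curves $\gamma=(p,r)\in\pi_1(T^2)$ generate $\C[X_G^0(T^2)]$, and on toral representations they equal $\sum_{\mu\in\text{wts}(V(\lambda))}t^{p\mu}s^{r\mu}$. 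Goldman's symplectic form on $X_G^0(T^2)$ restricts on $T_G\times T_G$ to the Atiyah--Bott cup-product pairing, which sends $(X_1,Y_1),(X_2,Y_2)\in\h\oplus\h$ to $(X_1,Y_2)-(X_2,Y_1)$ via the invariant form on $\h$; applied to the above trace functions this reproduces the quantum formula verbatim, after the appropriate normalization of the bilinear form.

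The principal obstacle is the normalization match in part (2). The quantum side uses the dual Killing form normalized by $(\lambda,\lambda)=2$ on short roots, while Goldman's construction uses the standard trace pairing on the defining representation of each classical group; one must verify, family by family among $GL_n,SL_n,SO_n,Sp_n$, that these two normalizations agree up to an overall scalar and that the character sums for fundamental representations match the cup-product Poisson bracket on the nose. The fact that such closed-form trace-function identities are standard only for the classical families is precisely what restricts part (2) to them; for exceptional $\g$ additional scalar identifications would be needed. By comparison, the flatness / orbit-basis step in part (1) is routine, requiring only a careful check that reduction modulo $q-1$ commutes with taking $W$-invariants, which the orbit-sum basis makes automatic.
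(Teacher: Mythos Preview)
Your plan for part (1) is correct and coincides with the paper's argument: the paper likewise identifies $\A_\g\otimes\C_0\simeq\C[\Lambda_\g^2]$ (your PBW/flatness statement is exactly Proposition~\ref{presentation} plus Lemma~\ref{lin_indep}), takes $W$-invariants, and then invokes the isomorphism $\C[\Lambda_\g^2]^W\simeq\C[X_G^0(\Z^2)]$ coming from the identification $X_G^0(\Z^2)\simeq\T^2/W$. The orbit-sum basis you mention is implicit in the paper's construction of $\Theta$ in (\ref{isoms}).

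For part (2), your route differs from the paper's. You propose to compute the induced Poisson bracket on all of $\C[\T\times\T]$ directly from the commutator (your formula is correct) and then match it against the Atiyah--Bott cup-product description of Goldman's symplectic form restricted to toral representations. The paper instead never writes down the symplectic form abstractly: it quotes Goldman's combinatorial intersection-point formulas for $\{\tau_\alpha,\tau_\beta\}$ from \cite{Go2}, specializes them to the torus to obtain the closed forms (\ref{sln_tor_brack}) and (\ref{spso_tor_brack}), and then verifies equality with the quantum bracket \emph{on the trace generators $\tau_{a,b}$} separately for $SL(n)$ and for $SO(n)/Sp(2n)$, using the explicit values of $(\alpha_i,\alpha_j)$ in each case.

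The advantage of your approach is uniformity: one bracket formula on $\C[\T^2]$ and one symplectic identification would in principle handle all $\g$ at once. Its cost is precisely the obstacle you flag: you must prove that the invariant form entering the Atiyah--Bott pairing agrees with the dual Killing form normalized by $(\text{short root},\text{short root})=2$, and you do not carry this out. The paper's case-by-case computation sidesteps this by simply plugging in the Killing-form values (\ref{killing_sln}) and (\ref{sp_kill}) on one side and Goldman's trace-form-based formulas on the other; the normalizations are absorbed into the concrete matching on generators. So your proposal is sound as a strategy, but the step you identify as the principal obstacle is exactly the content the paper supplies, and until you perform that verification your argument for (2) is incomplete.
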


By \cite{FG,Sa}, $\A_{sl(2)}^W$ is the Kauffman bracket skein algebra of the torus.
Therefore, Theorem \ref{AgW_quant_s}(2) generalizes the result of \cite{BFK} for torus to
higher rank classical groups. We will discuss the relations between the present work
and skein modules of higher rank (in particular those of \cite{S-SUn}) in an upcoming \cite{S-skein}.

\subsection{$I_{\g,K}^W$ as a quantization of the $G$-representations of $\pi_1(S^3\setminus K)$.}
By Theorem \ref{AgW_quant_s} we have a $\C$-algebra homomorphism
\begin{equation}\label{ve}
\ve: \A_\g^W\to \C[X_G^0(\Z^2)]
\end{equation}
given by evaluation $q=1$.

Given a knot $K\subset S^3,$ let $M_K$ be the compactification of $S^3\setminus K$ with boundary
torus, $\p M_k=T$. The embedding $\p M_K\hookrightarrow M_K$ defines a homomorphism
$\phi_K: \C[X_G^0(T)]\to \C[X_G^0(M_K)]$ whose kernel we denote by $A_{G,K}.$
We call it the $A_G$-ideal of $K$. The $A_{SL(2,\C)}$-ideal of $K$ determines the
$A$-polynomial of $K$ of \cite{CCGLS}. (That is the motivation for the name "$A_G$-ideal".)

\begin{conjecture}\label{oa}
The zero set of
$\ve(I_{\g,K}^W)\triangleleft \C[X_G^0(T)]$ is the closure of the image of $X_G^0(M_K)\to
X_G^0(T).$ Equivalently,
$$\sqrt{\ve(I_{\g,K}^W)}=A_{G,K},$$ where $\sqrt{\cdot}$ denotes the nil-radical.
\end{conjecture}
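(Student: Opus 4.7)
The plan is to split Conjecture \ref{oa} into the two inclusions $\sqrt{\ve(I_{\g,K}^W)}\subseteq A_{G,K}$ and $A_{G,K}\subseteq \sqrt{\ve(I_{\g,K}^W)}$, and attack them separately.

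For the ``easy'' inclusion $\sqrt{\ve(I_{\g,K}^W)}\subseteq A_{G,K}$, I would use Theorem \ref{AgW_quant_s} to identify $\A_\g^W/(q-1)$ with $\C[X_G^0(T)]$, so that each recursive operator $P\in I_{\g,K}^W$ specializes to a regular function $\ve(P)$ on the character variety of the torus. It then suffices to check that for every character $\chi\in X_G^0(M_K)$ that extends to the knot complement, the function $\ve(P)$ vanishes at $\chi|_T$. I would establish this by exhibiting an asymptotic family of weights $\lambda_n\in \Lambda_\g$ and a specialization $q\to 1$ along which $J_{\g,K}(\lambda_n)$ has a classical limit computable as a trace of the $G$-holonomy of $\chi$ around the meridian and longitude. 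The recursion $PJ_{\g,K}=0$ then passes to the limit and forces $\ve(P)(\chi|_T)=0$. For $\g=sl(2)$ this is provided by the Kauffman bracket / character variety correspondence already used in \cite{BFK}; for general $\g$ it should follow from the higher-rank skein-theoretic picture promised in \cite{S-skein}.

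For the reverse inclusion, one must show that the quantum recursion ``sees enough'' of the classical geometry to cut out the closure of the image of $X_G^0(M_K)$ exactly. A natural strategy is geometric: represent $J_{\g,K}$ as the image under a trace/bracket pairing of a skein class $[K]$ in a suitable $\g$-skein module $\mathcal{S}_\g(M_K)$, show that the annihilator of $[K]$ in the torus skein algebra $\mathcal{S}_\g(T)$ contains $I_{\g,K}^W$ (indeed, ideally equals it), and then apply a classical-limit theorem identifying $\mathcal{S}_\g(M_K)$ modulo nilpotents at $q=1$ with $\C[X_G^0(M_K)]$. Under such an identification the annihilator of $[K]$ in $\mathcal{S}_\g(T)$ would specialize to exactly $A_{G,K}$, giving the desired equality after taking radicals.

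The principal obstacle is this second inclusion, which for $\g=sl(2)$ is already the AJ-conjecture of Garoufalidis and is open in general. Two specific difficulties stand out. First, a nilpotent-free identification of higher-rank skein modules with coordinate rings of character varieties has been rigorously established only for $SL(2,\C)$, so the classical-limit input needed above is itself an open problem for general $\g$. Second, even granting such an identification, the annihilator of $[K]$ in $\mathcal{S}_\g(T)$ could in principle be strictly larger than $I_{\g,K}^W$, since $J_{\g,K}$ records only one particular trace of the skein element rather than the element itself; reconciling the two ideals at $q=1$ will likely require a saturation/irreducibility argument for the image variety together with new input on the behavior of Verma-module WRT invariants in the classical limit that goes beyond purely quantum-algebraic manipulation.
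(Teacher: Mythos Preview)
There is no proof in the paper to compare against: the statement is explicitly labeled a \emph{Conjecture}, and the paper does not attempt to prove it in general. What the paper does is (i) verify it for the unknot and the trefoil by direct computation of both sides (Proposition~\ref{IvA} and Corollary~\ref{IvAcor}), and (ii) show in Theorem~\ref{def=char} that Conjecture~\ref{oa} \emph{implies} the AJ-conjecture of Garoufalidis. In particular the conjecture is at least as hard as AJ, so a general proof is not expected here.

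Your proposal is an honest strategy sketch rather than a proof, and you correctly flag the reverse inclusion as essentially open. Two remarks are still worth making. First, your ``easy'' inclusion $\sqrt{\ve(I_{\g,K}^W)}\subseteq A_{G,K}$ is not established in the paper either, and the asymptotic/classical-limit argument you outline (passing $PJ_{\g,K}=0$ to a limit along a family of weights to obtain $\ve(P)(\chi|_T)=0$) is heuristic: making this rigorous requires control of the classical limit of Verma-module WRT invariants that is not currently available beyond $sl(2)$, so this direction is also open in general. Second, the relation to AJ is slightly different from what you wrote: Conjecture~\ref{oa} is the $W$-invariant statement, and Theorem~\ref{def=char} deduces the non-invariant AJ statement from it via the Galois-type averaging argument $\prod_{w\in W}(x-w\cdot g)$; so Conjecture~\ref{oa} is a priori \emph{stronger} than AJ, not merely one of its inclusions.
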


Let $\Z^2=\langle L,M \rangle.$ By Theorem \ref{AgW_quant_s},
$\C[X_{SL(2,\C)}(\Z^2)]=\C[E^{\pm 1}, Q^{\pm 1}]^{\Z/2},$
where $\Z/2=\{e,\imath\},$
$$\imath(E)=E^{-1},\quad \imath(Q)=Q^{-1}.$$
Under the isomorphism which will be defined in (\ref{isoms}),
the regular function $\tau_{a,b}\in \C[X_{SL(2)}(\Z^2)],$
$\tau_{a,b}([\rho])= tr \rho (L^aM^b),$  corresponds
to $E^a Q^b+E^{-a}Q^{-b},$ for any $a,b\in \Z.$

\begin{proposition}\label{IvA}(Proof in Sec \ref{proof_Iva})\\
(1) For the unknot $U,$
$$\ve(I_{sl(2,\C),U}^W)=\langle E+E^{-1}-2, EQ+E^{-1}Q^{-1}-Q-Q^{-1}\rangle=A_{sl(2,\C),U}\triangleleft
\C[X_{SL(2,\C)}(\Z^2)].$$
(2) For the left-handed trefoil,
$$\ve(I_{sl(2,\C),K}^W)=\langle w(Q^2-Q^{-2}), w^2,
w(EQ^4-E^{-1}Q^{-4}) \rangle,$$
$$A_{sl(2,\C),K}=\langle w(Q-Q^{-1}),w(E-E^{-1}),w(EQ^{-1}-E^{-1}Q)\rangle,$$
where $w=E^{-1}Q^3(E-1)(EQ^{-6}+1).$
(3) For the right-handed trefoil, one needs to change $Q$ to $Q^{-1}$ in the formulas above.
\end{proposition}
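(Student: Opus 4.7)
The plan is to verify each part by (i) evaluating $\ve$ on the explicit generators of $I_{sl(2,\C),K}^W$ supplied by Proposition \ref{inv_rec} and (ii) independently computing the $A_G$-ideal from the presentation of $\pi_1(M_K)$, then comparing the two.

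\emph{Part (1): The unknot.} Setting $q=1$ in the two generators of Proposition \ref{inv_rec}(1) immediately yields $E+E^{-1}-2$ and $EQ+E^{-1}Q^{-1}-Q-Q^{-1}$, which gives the first claimed equality. For the second equality, $\pi_1(M_U)\cong\Z$ is generated by the meridian and the Seifert-framed longitude is trivial; under the identification $\tau_{a,b}\leftrightarrow E^aQ^b+E^{-a}Q^{-b}$ (so that $E,Q$ track the eigenvalues of the longitude and meridian, respectively), the image of $X_{SL(2,\C)}^0(M_U)\to X_{SL(2,\C)}^0(T)$ is the $\Z/2$-quotient of $\{E=1\}$, which is cut out in $\C[E^{\pm1},Q^{\pm1}]^{\Z/2}$ by exactly these two polynomials.

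\emph{Part (2): The left-handed trefoil.} The key preliminary is to expand $w=E^{-1}Q^3(E-1)(EQ^{-6}+1)=EQ^{-3}-E^{-1}Q^3+Q^3-Q^{-3}$ and observe that $w$ is anti-invariant under the involution $(E,Q)\mapsto(E^{-1},Q^{-1})$, so that $w\cdot v\in\C[E^{\pm1},Q^{\pm1}]^{\Z/2}$ for every anti-invariant Laurent polynomial $v$. A direct expansion then shows that $\ve$ sends the three generators of Proposition \ref{inv_rec}(2) (in the order listed) to $\pm w(Q^2-Q^{-2})$, $w^2$, and $\pm w(EQ^{-4}-E^{-1}Q^4)$ respectively; the last of these agrees with $w(EQ^4-E^{-1}Q^{-4})$ modulo $w(Q^2-Q^{-2})$, since their difference equals the invariant multiple $(E+E^{-1})(Q^2+Q^{-2})\cdot w(Q^2-Q^{-2})$. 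For the $A$-ideal I use the standard presentation $\pi_1(M_K)=\langle x,y\mid xyx=yxy\rangle$: the $SL(2,\C)$-character variety splits into an abelian component (on which the longitude acts trivially, i.e.\ $E=1$) and a non-abelian component (on which $EQ^{-6}=-1$ for the orientation used in this paper), so the image in $X_{SL(2,\C)}^0(T)$ is the zero locus of $(E-1)(EQ^{-6}+1)$ in $(\C^*)^2/(\Z/2)$. Since $w$ is a monomial multiple of this defining polynomial and is anti-invariant, the vanishing ideal in the invariant ring is $w$ times the span of anti-invariants modulo invariants, and a short check shows that the three elements $Q-Q^{-1}$, $E-E^{-1}$, $EQ^{-1}-E^{-1}Q$ suffice to generate this span.

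\emph{Part (3)} follows from part (2) by applying the involution $(Q,q)\mapsto(Q^{-1},q^{-1})$, which realizes the mirror symmetry between the two trefoils and is exactly the symmetry recorded in Proposition \ref{inv_rec}(3). The main obstacles are the identification of $\ve$ of the second generator with $w^2$ (a matching of multi-term Laurent polynomial expansions, which I would verify by expanding both sides and comparing coefficients) and the final claim that $Q-Q^{-1}$, $E-E^{-1}$, $EQ^{-1}-E^{-1}Q$ generate every anti-invariant Laurent polynomial modulo $\Z/2$-invariants, a small piece of $\Z/2$-representation theory for the action $(E,Q)\mapsto(E^{-1},Q^{-1})$ on $(\C^*)^2$.
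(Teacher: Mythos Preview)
Your approach is essentially the same as the paper's: evaluate the generators of Proposition~\ref{inv_rec} at $q=1$, identify the results as $w$ times an anti-invariant, and compute $A_{sl(2,\C),K}$ by noting that $\imath(w)=-w$ forces every element of the $A$-ideal to be $w$ times an anti-invariant Laurent polynomial, with the anti-invariants generated over the invariant ring by $Q-Q^{-1}$, $E-E^{-1}$, and $EQ^{\pm1}-E^{-1}Q^{\mp1}$. The paper makes this last step explicit via the recursions $r_{a+1,b}=(E+E^{-1})r_{a,b}-r_{a-1,b}$ and $r_{a,b+1}=(Q+Q^{-1})r_{a,b}-r_{a,b-1}$ for $r_{a,b}=E^aQ^b-E^{-a}Q^{-b}$, which is exactly the ``small piece of $\Z/2$-representation theory'' you flag.

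Two minor remarks. First, in Part~(1) you assert that the two polynomials cut out the vanishing ideal of $\{E=1\}/(\Z/2)$ without argument; the paper supplies one by reducing an arbitrary element of $A_{sl(2,\C),U}$ modulo $E+E^{-1}-2$ to span at most two in $E$, then writing it as a $\C$-linear combination of the elements $x_k=EQ^k+E^{-1}Q^{-k}-(Q^k+Q^{-k})$ plus a residual $w(Q)$ that must vanish. Second, your observation that $\ve$ of the third trefoil generator is $\pm w(EQ^{-4}-E^{-1}Q^4)$ rather than $w(EQ^4-E^{-1}Q^{-4})$, and that these agree modulo $w(Q^2-Q^{-2})$, is a nice catch: the paper simply asserts the latter form, so your intermediate reduction is actually more careful on this point.
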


\begin{corollary}\label{IvAcor}
For the unknot and for the trefoil Conjecture \ref{oa} holds.
However, $\ve(I_{sl(2,\C),K}^W) \varsubsetneq A_{sl(2,\C),K}$ for the trefoil.
\end{corollary}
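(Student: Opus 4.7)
The corollary is an entirely algebraic consequence of the explicit generators furnished by Proposition \ref{IvA}, so the plan is to read the ideal equalities/inequalities directly off those generators.

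For the unknot, the two generating sets listed in Proposition \ref{IvA}(1) are literally identical, so $\ve(I_{sl(2,\C),U}^W)=A_{sl(2,\C),U}$ on the nose, and taking nil-radicals preserves the equality. Nothing further is needed.

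For the trefoil, I would attack the two statements of Conjecture \ref{oa} in turn. Writing $w=E^{-1}Q^3(E-1)(EQ^{-6}+1)$, every generator in \emph{both} ideals is a polynomial multiple of $w$ (the generator $w^2$ on the $\ve(I^W)$-side is the only subtlety), so $Z(\ve(I^W))\supseteq\{w=0\}$ and $Z(A)\supseteq\{w=0\}$. Conversely, $w\in\sqrt{\ve(I^W)}$ because $w^2\in\ve(I^W)$, which gives $Z(\sqrt{\ve(I^W)})\subseteq\{w=0\}$. For the $A$-side I would check that if $w\ne 0$ then the three equations $Q=Q^{-1}$, $E=E^{-1}$, $EQ^{-1}=E^{-1}Q$ force $(E,Q)\in\{\pm1\}^2$, and verify by direct substitution that $w$ vanishes at each of these four points; thus $Z(A)\subseteq\{w=0\}$ as well. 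To promote the equality of zero sets to an equality of ideals it is enough to know that $A$ is already a radical ideal, which follows because the two factors $(E-1)$ and $(EQ^{-6}+1)$ of $w$ are coprime and irreducible in $\C[E^{\pm 1},Q^{\pm 1}]$, so $\sqrt{A}=(w)\cdot(\text{anti-invariants})\cap\C[E^{\pm 1},Q^{\pm 1}]^{\Z/2}$ coincides with $A$ once one checks the three listed generators already generate that module.

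To establish the strict containment $\ve(I_{sl(2,\C),K}^W)\varsubsetneq A_{sl(2,\C),K}$, I would exhibit $w(Q-Q^{-1})\in A$ and show it does not lie in $\ve(I^W)$. Since every element of $\ve(I^W)$ is a polynomial combination of $w(Q^2-Q^{-2})$, $w^2$, and $w(EQ^4-E^{-1}Q^{-4})$, dividing a hypothetical representation of $w(Q-Q^{-1})$ by $w$ in the ambient Laurent ring reduces the question to whether $Q-Q^{-1}$ lies in the ideal generated by $(Q-Q^{-1})(Q+Q^{-1})$, $w$, and $EQ^4-E^{-1}Q^{-4}$ modulo $(w)$; a direct monomial-degree comparison rules this out. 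The main technical obstacle in the whole argument is precisely this last verification — ensuring no miraculous cancellation occurs — but once cast in the quotient ring $\C[E^{\pm 1},Q^{\pm 1}]/(w)^{\Z/2}$ it reduces to a finite-dimensional linear-algebra check in the two components $\{E=1\}$ and $\{E=-Q^6\}$ separately, where the images of the three $\ve(I^W)$-generators all vanish identically while $w(Q-Q^{-1})$ — viewed one step earlier — does not factor through them.
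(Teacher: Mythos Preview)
Your plan is workable but noticeably more elaborate than the paper's, and two steps are not adequately justified.

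For the equality $\sqrt{\ve(I^W)}=A$ at the trefoil, the paper avoids zero sets entirely. It simply observes that each listed generator of $A$ has the form $w\cdot r$ with $r$ anti-invariant, so its square $w^2r^2$ is an invariant multiple of $w^2\in\ve(I^W)$; thus $A\subset\sqrt{\ve(I^W)}$. Together with the easy inclusion $\ve(I^W)\subset A$ (immediate from the module description of $A$ in the proof of Proposition~\ref{IvA}(2)) and the radicality of $A$, this finishes in one line. Your Nullstellensatz route via $Z(\cdot)=\{w=0\}$ is correct, but your stated reason for $A$ being radical --- coprimality and irreducibility of the factors of $w$ --- is neither the right argument nor carried through. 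The clean justification is that $A$ is by definition the kernel of a homomorphism into the reduced ring $\C[X_G^0(M_K)]$, hence automatically radical.

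For the strict inclusion you and the paper both divide by $w$, reducing to showing that $Q-Q^{-1}$ is not an $R$-combination of $Q^2-Q^{-2}$, $w$, and $EQ^4-E^{-1}Q^{-4}$ (with $R=\C[E^{\pm1},Q^{\pm1}]^{\Z/2}$). The paper settles this by evaluating at a single point of $(\C^*)^2$ in the common zero locus of those three elements. Your proposed ``monomial-degree comparison'' and subsequent ``finite-dimensional linear-algebra check'' on the components of $\{w=0\}$ are left entirely unspecified; they can be completed (restricting to $E=1$ reduces the question to whether $Q-Q^{-1}\in(Q^2-Q^{-2})\cdot\C[Q+Q^{-1}]$, which fails at $Q=i$), but this is just a roundabout way of arriving at the same evaluation-at-a-point argument. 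As written, your last paragraph is too vague to count as a proof.
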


\begin{proof} For the unknot the statement is obvious.
For the trefoil, observe that the square of every generator of $A_{sl(2,\C),K}$ is divisible by $w^2$
and hence contained in $\ve(I_{sl(2,\C),U}^W.$ Therefore, $\sqrt{\ve(I_{\g,K}^W)}=A_{G,K}.$
On the other hand, $Q-1$ does not belong to the ideal $\langle Q^2-Q^{-2},w, EQ^4-E^{-1}Q^{-4}\rangle$,
since $(E,Q)=(1,-1)$ belongs to the zero set of that ideal.
Consequently, $w(Q-Q^{-1})\not\in \ve(I_{\g,K}^W).$
\end{proof}

Corollary \ref{IvAcor} disproves \cite[Conj. 2]{LeTB}.
%\footnote{Proposition \ref{IvA}(3) contradicts \cite[Prop. 6.3]{LeTB}. Indeed, there is an error in
%its proof.}

\begin{theorem}\label{def=char} (Proof in Sec. \ref{def=char_p})
Conjecture \ref{oa} for a given $K$ and $\g$ implies that the characteristic and deformation varieties of
Garoufalidis coincide and, in particular, AJ conjecture of \cite{Ga2} holds and \cite[Question 1]{Ga2}
has affirmative answer.
%\footnote{Conjecture \ref{oa} appears stronger than the AJ conjecture.}.\\
\end{theorem}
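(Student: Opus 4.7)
The plan is to unpack Garoufalidis's definitions of the characteristic and deformation varieties, translate them into the language of this paper, and then observe that Conjecture \ref{oa} becomes exactly the statement that these two varieties coincide.

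For $\g = sl(2,\C)$ the deformation variety of $K$ (in the sense of \cite{Ga2}) is the zero set of the $A$-polynomial of \cite{CCGLS}, equivalently the closure of the image of the restriction map $X_{SL(2,\C)}^0(M_K) \to X_{SL(2,\C)}^0(T)$; by the definition of $A_{G,K} = \ker \phi_K$ this is exactly $V(A_{SL(2,\C),K})$ inside $X_{SL(2,\C)}^0(T)$. Garoufalidis's characteristic variety, on the other hand, is the zero set of the $q=1$ specialization of the recursive ideal $I_{sl(2,\C),K}$ inside $(\C^*)^2$, modulo the $\Z/2$-action $(E,Q) \mapsto (E^{-1},Q^{-1})$. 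Since $I_{sl(2,\C),K}$ is $W$-stable by Proposition \ref{J-sym}, so is its $q=1$ specialization, and descending to $X_{SL(2,\C)}^0(T) = (\C^*)^2/(\Z/2)$ via the isomorphism of (\ref{isoms}) identifies the characteristic variety with $V(\ve(I_{sl(2,\C),K}^W))$ as a reduced subscheme of $X_{SL(2,\C)}^0(T)$.

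Given these identifications, Conjecture \ref{oa} reads $\sqrt{\ve(I_{\g,K}^W)} = A_{G,K}$, which by Hilbert's Nullstellensatz is equivalent to the equality $V(\ve(I_{\g,K}^W)) = V(A_{G,K})$ of reduced subschemes of $X_G^0(T)$ --- precisely the coincidence of the characteristic and deformation varieties. The AJ conjecture and the affirmative answer to Question 1 of \cite{Ga2} then follow directly, since for $\g = sl(2,\C)$ they are (essentially) equivalent formulations of this coincidence up to trivial factors detected by the $A$-polynomial framework. The main obstacle will be the careful bookkeeping in the characteristic-side identification: one must verify that descending the non-invariant ideal $\ve(I_{sl(2,\C),K}) \triangleleft \C[E^{\pm 1}, Q^{\pm 1}]$ to the quotient $(\C^*)^2/(\Z/2)$ produces exactly (the radical of) $\ve(I_{sl(2,\C),K}^W)$, rather than an ideal that differs by spurious components, so that the equality of radicals asserted in Conjecture \ref{oa} translates into the set-theoretic equality of varieties demanded by Garoufalidis. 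Once this dictionary is fixed the remaining argument is purely formal.
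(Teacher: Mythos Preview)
Your proposal correctly identifies the crux of the matter---passing between the $W$-invariant ideal $\ve(I_{\g,K}^W)$ that appears in Conjecture~\ref{oa} and the non-invariant ideal $\ve(I_{\g,K})$ that defines Garoufalidis's variety in $(\C^*)^{2\,\mathrm{rank}\,\g}$---but you explicitly defer this step, calling it ``bookkeeping'' and asserting that ``once this dictionary is fixed the remaining argument is purely formal.'' That step \emph{is} the proof. The paper carries it out by the norm trick: for $g\in I_{\g,K}$ one expands $\prod_{w\in W}(x-w\cdot\ve(g))=x^N+\sum_{k<N}c_kx^k$; since $I_{\g,K}$ is $W$-stable each $c_k$ lies in $\ve(I_{\g,K}^W)$, so assuming Conjecture~\ref{oa} one gets $\ve(g)^N=-\sum c_k\ve(g)^k\in A_{G,K}$ and hence $\ve(g)\in\sqrt{A_{G,K}}$. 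The reverse inclusion is handled symmetrically. Without this argument (or an equivalent one showing that a $W$-stable ideal and its invariant part cut out the same set upstairs, up to radical), your ``dictionary'' is not established and the proposal is incomplete precisely at the point where the work lies.

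Two smaller issues: you treat only $\g=sl(2,\C)$, whereas the theorem and the paper's proof are for arbitrary $\g$ (the norm argument works uniformly for any Weyl group); and you have swapped the names ``characteristic'' and ``deformation'' relative to the paper's usage in Section~\ref{def=char_p}, where the characteristic variety is $Z(A_{G,K})$ and the deformation variety is $Z(\ve(I_{\g,K}))$. The latter is cosmetic, but the former means your write-up does not actually cover the general statement.
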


\subsection{Acknowledgements} We would like to thank Charlie Frohman and Thang T. Q. Le for helpful discussion.

%%%%%%%%%%%%%%%%%%%%%%%%%%%%%%%%%%%%%%%%%%%%%%%%%%%%%%%%%%%%%%%%%%%%%%%%%%%%%%

\section{WRT knot functions and quantum Weyl algebras}
\label{s_WRT}

%%%%%%%%%%%%%%%%%%%%%%%%%%%%%%%%%%%%%%%%%%%%%%%%%%%%%%%%%%%%%%%%%%%%%%%%%%%%%%%%

For every $\g$-module $V,$ $V[[h]]=V\otimes \C[[h]]$ is a module over the quantum
group $U_h(\g),$ \cite{CP,KS}. Let $K'$ be a $1$-tangle obtained by cutting a knot $K$ open.
Reshetikhin-Turaev construction associates with $K'$ colored by a representation
$V$ of $\g$ (or $V[[h]]$ of $U_h(\g)$) a morphism of $U_q(\g)$-modules $V[[h]]\to V[[h]]$ which is in
the center of
$End_{U_h(\g)}(V[[h]]).$ (This is related to the fact that all $1$-tangles commute under composition.)
Consequently, if $V$ is irreducible then, by Schur's Lemma, the WRT invariant of $K'$ is a scalar
multiple of the identity. We denote that scalar by $WRT_{\g,V}(K').$ It lies in
$\C[q^{\pm 1}]\subset \C[[h]],$ where $q=e^h,$
and $$WRT_{\g,V}(K)=WRT_{\g,V}(U)\cdot WRT_{\g,V}(K'),$$
where the WRT-invariant of the unknot is the quantum dimension
of $V$,
$$WRT_{\g,V}(U)=dim_q(V)= \frac{\sum_{w\in W} sgn(w)q^{(\lambda+\rho,w(\rho))}}
{\sum_{w\in W} sgn(w)q^{(\rho,w(\rho))}}$$
and $\rho$ is the half-sum of positive roots.
Rozansky (for $sl(2)$ in \cite{Ro}), Le for $sl(2),$ \cite{HL}, and for all $\g$,
\cite[Lemma 7.7]{GL1}, proved that Reshetikhin-Turaev definition of $WRT_{\g,V}(K')$ makes sense
verbatim for all Verma modules $V$ despite the fact that they are infinite dimensional.
(This is not obvious, since Reshetikhin-Turaev construction involves quantum traces, which
in case of infinite dimensional modules involve sums in $\C[q^{\pm 1}]$ which are a priori infinite.
One has to prove that all but finitely many summands in all these sums vanish.)

For every Verma module $\lambda\in \Lambda_\g$, we define
\begin{equation}\label{Verma_def}
WRT_{\g,M(\lambda)}(K)=dim_q(M(\lambda))\cdot WRT_{\g,M(\lambda)}(K')
\end{equation}
and, by (\ref{WRT_J}), $J_{\g,K}(\lambda)=WRT_{\g,M(\lambda-\rho)}(K).$

By \cite[Theorem 8]{GL1}, $WRT_{\g,M(\lambda)}(K')$ is $q$-holomorphic.
Since the product of $q$-holomorphic functions is $q$-holomorphic,
$J_{\g,K}(\lambda)$ is $q$-holomorphic as well.

\subsection{Example: $sl(n)$-quantum invariants of the unknot}
\label{ss_sln_unknot}
Let $sl(n)$ be the algebra of traceless $n\times n$ matrices.
Consider the standard Cartan subalgebra of $sl(n)$ composed of diagonal matrices:
$$\h=\left\{\sum a_iE_{ii}: \sum a_i=0\right\}.$$
The weights $\alpha_i:\h\to \C,$ such that $\alpha_i(E_{jj})=\delta_{ij},$ generate the weight
lattice of $sl(n)$ and the Killing form on $\h^*$ is given by

\begin{equation}\label{killing_sln}
(\alpha_i,\alpha_j)=\begin{cases} \frac{n-1}{n} & i=j\\
-\frac{1}{n} & i\ne j,\\
\end{cases}
\end{equation}
c.f. \cite[Formula 15.2]{FH}.
The Weyl group $W=S_n$ permutes the weights $\alpha_1,...,\alpha_n.$
The positive roots are $\alpha_i-\alpha_j,$ for $i>j,$ and $$\rho=\frac{n-1}{2}\alpha_1+
\frac{n-3}{2}\alpha_2+... -\frac{n-3}{2}\alpha_{n-1}-\frac{n-1}{2}\alpha_n.$$
Let $E_i=E_{\alpha_i}$, $i=1,...,n.$
By (\ref{unknot}),
\begin{equation}\label{E_unknot}
\sum_{i=1}^n E_iJ_{sl(n,\C),U}=\frac{1}{S} \sum_{w\in W} \left(sgn(w)q^{(\lambda,w(\rho))} \sum_{i=1}^n
q^{(\alpha_i,w(\rho))}\right),
\end{equation}
where $$S={\sum_{w\in W} sgn(w)q^{(\rho,w(\rho))}}.$$
Since $(\alpha_i,\rho)=\frac{1}{2}(n+1-2i),$
the second sum in (\ref{E_unknot}) is equal to
$$\sum_{i=1}^n q^{(\alpha_i,w(\rho))}=\sum_{i=1}^n q^{(w^{-1}(\alpha_i),\rho)}=
\sum_{i=1}^n q^{(\alpha_i,\rho)}=\sum_{i=1}^n q^\frac{n+1-2i}{2}=[n],$$ for every $w\in S_n,$
where $[n]$ is the $n$-th quantum integer, $[n]=\frac{q^{n/2}-q^{-n/2}}{q^{1/2}-q^{-1/2}}.$

\begin{corollary}\label{rec_unknot}
$\sum_{i=1}^n E_i - [n]$ belongs to the $sl(n)$-recursive ideal of
the unknot.
\end{corollary}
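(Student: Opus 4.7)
The plan is a direct substitution: the preceding computation has reduced the inner sum to a constant independent of $w$, so the outer sum collapses to a scalar multiple of $J_{sl(n,\C),U}(\lambda)$.

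First I would substitute the identity
\[
\sum_{i=1}^n q^{(\alpha_i,w(\rho))} = [n]
\]
(valid for every $w \in S_n$, as just established) into equation (\ref{E_unknot}). Since $[n]$ is independent of $w$, it factors out of the sum over $W$, yielding
\[
\sum_{i=1}^n E_i\, J_{sl(n,\C),U}(\lambda) = [n]\cdot\frac{\sum_{w\in W} sgn(w)\,q^{(\lambda,w(\rho))}}{\sum_{w\in W} sgn(w)\,q^{(\rho,w(\rho))}}.
\]
By formula (\ref{unknot}), the right-hand side equals $[n]\cdot J_{sl(n,\C),U}(\lambda)$. Rearranging gives
\[
\Bigl(\sum_{i=1}^n E_i - [n]\Bigr)\cdot J_{sl(n,\C),U} = 0
\]
as an identity of functions on $\Lambda_{sl(n)}$.

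Since $[n] \in \C[q^{\pm 1/2}] \subset \C[q^{\pm 1/D(sl(n))}]$, the element $\sum_{i=1}^n E_i - [n]$ lies in $\A_{sl(n,\C)}$, and the displayed identity says precisely that it belongs to $I_{sl(n,\C),U}$. There is no real obstacle here; the entire content of the corollary is already packaged in the two computations preceding the statement, and the only task is to observe that they combine to exhibit an explicit annihilator.
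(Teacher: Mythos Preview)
Your proof is correct and is exactly the argument the paper has in mind: the corollary is stated immediately after the computation that the inner sum equals $[n]$, with no further proof given, because the substitution and comparison with (\ref{unknot}) that you spell out are the evident (and only) remaining steps.
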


\noindent{\bf Proof of Proposition \ref{J-sym}} (Suggested by T. Le):
By (\ref{WRT_J}) and (\ref{Verma_def}),
$$J_{\g,K}(\alpha)=J_{\g,U}(\alpha)\cdot WRT_{\g,M(\alpha-\rho)}(K').$$
By (\ref{unknot}), $$J_{\g,U}(w\cdot \alpha)=sgn(w)\cdot J_{\g,U}(\alpha).$$ Therefore,
it is enough to prove that $WRT_{\g,M(\alpha-\rho)}(K')$ is invariant under the action of $W$
on $\alpha.$

For every positive element $w\in W$ in Bruhat ordering (i.e. a product of reflections with respect of
positive roots)
and for every $\alpha\in \Lambda_g,$ $M(\lambda-\rho)$ is a submodule of $M(w\cdot \lambda-\rho),$ c.f.
\cite[Ch V.9 Problem 12]{Kn}.\footnote{In Knapp's book the Verma module $M(\lambda)$ is denoted by
$V(\lambda+\rho)$.}
This implies that $$WRT_{\g,M(\lambda-\rho)}(K')=WRT_{\g,M(w\cdot\lambda-\rho)}(K').$$
Since positive elements in $W$ generate $W$, the proof is completed.
\Box

\begin{lemma}\label{lin_indep}
Operators $Q_{\alpha} E_{\beta},$ for $\alpha,\beta\in \Lambda_\g,$ are linearly independent.
\end{lemma}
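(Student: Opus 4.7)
The plan is to show that any finite relation $\sum_i c_i Q_{\alpha_i} E_{\beta_i}=0$ with distinct pairs $(\alpha_i,\beta_i)\in\Lambda_\g\times\Lambda_\g$ and coefficients $c_i\in\C[q^{\pm 1/D(\g)}]$ forces every $c_i$ to vanish, and I would do this by probing the operator equation with characteristic functions. Recall that $(Q_\alpha E_\beta f)(\gamma)=q^{(\alpha,\gamma)}f(\gamma+\beta)$, so if $\delta_\mu\in F(\Lambda_\g,\C[q^{\pm 1/D(\g)}])$ denotes the indicator function of a point $\mu\in\Lambda_\g$, then $(Q_{\alpha_i}E_{\beta_i}\delta_\mu)(\gamma)$ equals $q^{(\alpha_i,\gamma)}$ when $\gamma=\mu-\beta_i$ and vanishes otherwise. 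Feeding $\delta_\mu$ into the hypothetical relation and evaluating at a fixed $\gamma$ therefore collapses the sum to
$$\sum_{i\,:\,\beta_i=\mu-\gamma} c_i\, q^{(\alpha_i,\gamma)}=0.$$
Letting $\mu$ and $\gamma$ vary independently, this is equivalent to the assertion that for every $\beta_0\in\Lambda_\g$ appearing among the $\beta_i$,
$$\sum_{i\,:\,\beta_i=\beta_0} c_i\, q^{(\alpha_i,\gamma)}=0\quad\text{for all }\gamma\in\Lambda_\g.$$
This reduces the lemma to the following claim: if $\alpha_1,\dots,\alpha_k\in\Lambda_\g$ are pairwise distinct and $\sum_{i=1}^k c_i\, q^{(\alpha_i,\gamma)}=0$ for every $\gamma\in\Lambda_\g$, then $c_1=\cdots=c_k=0$.

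To establish this linear independence of ``exponential characters'' I would use non-degeneracy of the dual Killing form. For each pair $i\neq j$ the set $\{\nu\in\Lambda_\g\otimes\Q:(\alpha_i-\alpha_j,\nu)=0\}$ is a proper $\Q$-subspace of $\Lambda_\g\otimes\Q$, so the finite union of these hyperplanes cannot contain the full-rank lattice $\Lambda_\g$. Hence there exists $\gamma^*\in\Lambda_\g$ such that the rational numbers $a_i:=(\alpha_i,\gamma^*)\in\tfrac{1}{D(\g)}\Z$ are pairwise distinct. Substituting $\gamma=m\gamma^*$ for $m=1,\dots,k$ into the displayed identity produces a $k\times k$ linear system $\sum_i c_i x_i^m=0$ with $x_i:=q^{(\alpha_i,\gamma^*)}=t^{D(\g)a_i}$, where $t:=q^{1/D(\g)}$. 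The elements $x_i\in\C[t^{\pm 1}]$ are pairwise distinct Laurent monomials, so the Vandermonde-type determinant $\bigl(\prod_i x_i\bigr)\prod_{i<j}(x_j-x_i)$ is a nonzero element of the integral domain $\C[t^{\pm 1}]$. Passing to the fraction field $\C(t)$ forces $c_i=0$ for all $i$, which completes the proof.

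The only delicate point is the existence of the separating direction $\gamma^*$: it combines non-degeneracy of the form on $\h^*$ with the elementary fact that a finite-dimensional $\Q$-vector space (hence any lattice spanning it) is not the union of finitely many proper $\Q$-subspaces. Once $\gamma^*$ is chosen, the rest is formal: Vandermonde handles the remaining linear algebra without further input from the Lie theory.
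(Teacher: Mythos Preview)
Your proof is correct and is in fact tidier than the paper's. Both arguments probe the operator identity with carefully chosen test functions, but the strategies diverge after that. The paper picks the indicator function of a sparse arithmetic progression $\{kN\alpha_0:k\in\Z\}$, chooses $(\alpha_0,\beta_0)$ by an extremality argument on the Killing norm, and then derives a contradiction by tracking the leading $q$-degree as $k\to\infty$. Your approach instead uses point indicators $\delta_\mu$, which immediately separates the relation by the $\beta$-component and reduces the question to the linear independence of the characters $\gamma\mapsto q^{(\alpha_i,\gamma)}$; you then finish with a one-variable Vandermonde argument after choosing a direction $\gamma^*$ that distinguishes all the $\alpha_i$. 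Your route isolates the two lattice variables cleanly and replaces the somewhat delicate growth/extremality bookkeeping of the paper (the choice of $N$, the claim that only $\beta=\beta_0$ survives, the Cauchy--Schwarz-type comparison of $(\alpha,\alpha_0)$) with a standard determinant computation, at the modest cost of the auxiliary observation that a full-rank lattice is not covered by finitely many rational hyperplanes.
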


\begin{proof}
Suppose that
\begin{equation}\label{lin_dep}
\sum_{\alpha,\beta} c_{\alpha,\beta} Q_{\alpha} E_{\beta}=0
\end{equation}
and $c_{\alpha_0,\beta_0}\ne 0$ for some $\alpha_0,\beta_0\in \Lambda_\g$ such that
\begin{equation}\label{max_beta}
(\beta_0,\beta_0)=max\{(\beta,\beta): c_{\alpha,\beta}\ne 0\ \text{for some $\alpha$}\}
\end{equation}
and
\begin{equation}\label{max_alpha}
(\alpha_0,\alpha_0)=max\{(\alpha,\alpha): c_{\alpha,\beta_0}\ne 0\}.
\end{equation}
Fix an integer $N>max\left\{\frac{(\beta,\beta_0)}{(\alpha_0,\alpha_0)}: c_{\alpha_0,\beta}\ne 0\right\}$
and let $f: \Lambda_\g \to \C[q^{\pm 1/D(\g)}],$
$$f(v)=\begin{cases} 1 & \text{if $v=kN\alpha_0$ for some $k\in \Z$}\\
0 & \text{otherwise}.\\ \end{cases}$$
Then the value of $\sum_{\alpha,\beta} c_{\alpha,\beta} Q_{\alpha} E_{\beta}f$ at
$-\beta_0+kN\alpha_0$ is zero.
On the other hand,
$$\sum_{\alpha,\beta} c_{\alpha,\beta} Q_{\alpha} E_{\beta}f(-\beta_0+kN\alpha_0)=
\sum_{\alpha,\beta} c_{\alpha,\beta} q^{(\alpha,-\beta_0+kN\alpha_0)}f(\beta-\beta_0+kN\alpha_0).$$
By the definitions of $f$ and $N$, the sum on the right equals
$$\sum_{\alpha} c_{\alpha,\beta_0} q^{(\alpha,-\beta_0+kN\alpha_0)},$$ and
by (\ref{max_alpha}), its leading term is
$$c_{\alpha_0,\beta_0} q^{(\alpha_0,-\beta_0+\alpha_0 kN)}.$$
It grows exponentially with $k$ -- a contradiction.
\end{proof}

\noindent{\bf Proof of Proposition \ref{presentation}:}
Since relations (\ref{qWeyl_rel}) are obviously satisfied by the operators $E_\alpha$ and $Q_\beta,$
$\alpha,\beta\in \Lambda_\g,$ it is enough to prove that all other relations between these operators
follow from (\ref{qWeyl_rel}).
Let $P$ be a polynomial in $E_\alpha$'s and $Q_\beta$'s, $\alpha,\beta\in \Lambda_\g,$
which equals to the zero operator on $F(\Lambda_\g,\C[q^{\pm 1/D(\g)}]).$
Relations (\ref{qWeyl_rel}) make possible to express $P$ as a sum
$$P=\sum_{\alpha,\beta} c_{\alpha,\beta} Q_{\alpha} E_{\beta},$$
over $\Lambda_\g\times \Lambda_\g,$ with $c_{\alpha,\beta}\in \C[q^{\pm 1/D(\g)}].$
By Lemma \ref{lin_indep}, all $c_{\alpha,\beta}$'s in the above sum vanish.
Hence the relation $P=0$ is a consequence of relations (\ref{qWeyl_rel}).
\Box

%%%%%%%%%%%%%%%%%%%%%%%%%%%%%%%%%%%%%%%%%%%%%%%%%%%%%%%%%%%%%%%%%%%%
%
\subsection{Proof of Proposition \ref{inv_rec}:}\label{proof_inv_rec}
%
%%%%%%%%%%%%%%%%%%%%%%%%%%%%%%%%%%%%%%%%%%%%%%%%%%%%%%%%%%%%%%%%%%%%%%%%%
(1) By \cite{FG,Sa}, $\A_{sl(2,\C)}^W$ is isomorphic to the Kauffman bracket skein module of the torus
and, by \cite{Ga1}, $I_{sl(2,\C),K}^W$ corresponds to the orthogonal ideal under that isomorphism.
More specifically, the $p/q$-torus knot on the torus corresponds to $(-1)^{p+q}q^{-ab/4}(E^pQ^q+E^{-p}Q^{-q}),$  \cite[Fact 4]{Ga1},
and the $t$ of \cite{FG, Ge} is our $q^{1/4}.$
The orthogonal ideal of the unknot was computed in \cite{FG}
It is generated by two elements:
$$\text{longitude} + (t^2+t^{-2})\quad \text{and}\quad (1,1)\text{-curve} +t^3\text{meridian}.$$
Therefore, $I_{sl(2,\C),U}^W$ is generated by $E+E^{-1}-(q^{1/2}+q^{-1/2})$ and $EQ+E^{-1}Q^{-1}-q(Q+Q^{-1}).$\\
(2) The orthogonal ideal of the left and right handed trefoil was computed in \cite{Ge}.
The result can be summarized as follows:
For $p,q$ coprime, let $(p,q)$ be the $p/q$-curve on the torus $T$ considered
as an element of the Kauffman bracket skein module of $T\times I,$
so that $(1,0)$ is the longitude and $(0,1)$ meridian with respect to the embedding $T=\p M_K\subset M_K.$
For $p,q$ such that
$gcd(p,q)=n\geq 0,$ let $(p,q)=T_n((p/n,q/n)),$ where $T_n(x)$ is the $n$-th Chebyshev polynomial:
$T_0(x)=2,$
$T_1(x)=x,$ $T_{n+1}(x)=x T_n(x)-T_{n-1}.$
Then the Kauffman bracket peripheral ideal of the left-handed trefoil $K$ is generated by elements:\footnote{We have independently verified that these polynomials generate $P_{sl(2,\C),K}.$ Please note the plus sign in the second term of the third generator, which is missing in \cite{Ge}.}\\
$(1,-5)-t^{-8}(1,-1)+t^{-3}(0,5)-t(0,1),$\\
$(2,-6)-(t^6+t^{-6})(1,0)+(t^4+t^{-4})(1,-6)+(0,6)-2(t^4+t^{-4}),$\\
$(2,-7)+t^{-5}(1,-7)+(t^{-5}-t^{-1})(1,-3)-t^5(1,-1)+(t^2-t^{-2})(0,3)-t^{-6}(0,1).$
Now the statement follows as in (1).

%%%%%%%%%%%%%%%%%%%%%%%%%%%%%%%%%%%%%%%%%%%%%%%%%%%%%%%%%%%%%%%%%%%%
%
\subsection{Proof of Proposition \ref{IvA}:}\label{proof_Iva}
%
%%%%%%%%%%%%%%%%%%%%%%%%%%%%%%%%%%%%%%%%%%%%%%%%%%%%%%%%%%%%%%%%%%%%%%%%%
(1) By Proposition \ref{inv_rec},
$$\ve(I_{\g,U}^W)=\langle E+E^{-1}-2, EQ+E^{-1}Q^{-1}-Q-Q^{-1}\rangle.$$

It remains to prove that this ideal coincides with $A_{sl(2,\C),U}\triangleleft
\C[X_{SL(2,\C)}(\Z^2)].$
It is easy to check that $E+E^{-1}-2$ and $EQ+E^{-1}Q^{-1}-Q-Q^{-1}$ belong to $A_{sl(2,\C),U}^W.$
We claim that these two elements generate $A_{sl(2,\C),U}.$
Let $x_k=EQ^k+Q^{-1}Q^{-k}-(Q^k+Q^{-k}).$ Since $x_0,x_1\in A_{sl(2,\C),U}$ and $x_{k+1}=(Q+Q^{-1})x_k-x_{k-1},$ $x_k\in A_{sl(2,\C),U},$ for all $k.$

Any element of that ideal can be reduced by $E+E^{-1}-2$ to a polynomial in
$\C[E^{\pm 1},Q^{\pm 1}]^{\Z/2},$ of span at most
$2$ in $E.$ It is easy to see that any such element is of the form
$Ep+q+E^{-1}\imath(p),$ where $p,q\in \C[Q^{\pm 1}].$
Therefore any $z\in A_{sl(2,\C),U}$ can be presented as
$$\sum_k c_k x_k +w(Q),$$
where $c_k\in \C$ and $w(Q)\in \C[Q^{\pm 1}].$
Since all $x_k$'s are in $A_{sl(2,\C),U},$ $w(Q)\in A_{sl(2,\C),U}.$
However, since the meridian of the unknot can be mapped to $\left(\begin{matrix} m & 0\\
0 & m^{-1}\\ \end{matrix}\right)$ for every $m\in \C^*,$ $w(Q)=0.$
Hence, every element of $A_{sl(2,\C),U}$ is a linear combination of $x_k$'s modulo
$E+E^{-1}-2.$

(2) The generators of the invariant recursive ideal of the left-handed
trefoil listed in Proposition \ref{inv_rec}(2) are equal to
$w(Q^2-Q^{-2}), w(1-E^{-1})(EQ^{-3}+Q^3), w(EQ^{4}-E^{-1}Q^{-4})$ for $q=1.$
In order to compute $A_{sl(2,\C),K}$ observe that since $(E-1)(E Q^{-6}+1)$ is the $A$-polynomial of the left handed-trefoil,
$$A_{sl(2,\C),K}^W=(w \cdot \C[E^{\pm 1},Q^{\pm 1}])\cap
\C[E^{\pm 1},Q^{\pm 1}]^{\Z/2}.$$ Since $\imath(w)=-w,$ every element of $A_{sl(2,\C),K}^W$ is of the form
$w\cdot p,$ where $\imath(p)=-p.$ Hence $p$ is a sum of monomials of the form $r_{a,b}=E^aQ^b-E^{-a}Q^{-b}.$
Since $$r_{a+1,b}=(E+E^{-1})r_{a,b}-r_{a-1,b},\quad r_{a,b+1}=(Q+Q^{-1})r_{a,b}-r_{a,b-1},$$
$p$ is a linear combination of $E-E^{-1},Q-Q^{-1},EQ-E^{-1}Q^{-1}$ with coefficients in
$\C[E^{\pm 1},Q^{\pm 1}]^{\Z/2}.$

\subsection{Proof of Theorem \ref {def=char}}\label{def=char_p}
The characteristic variety of Garoufalidis is the Zariski closure of the zero set
$$Z(A_{G,K})\subset Z(\C[\Lambda_\g])=(\C^*)^n\subset \C^n,$$
where $n$ is the rank of $\g.$
Similarly, the deformation variety is the closure of
$$Z(\ve(I_{\g,K}))\subset Z(\C[\Lambda_\g])=(\C^*)^n\subset \C^n.$$
Therefore, it is enough to prove that Conjecture \ref{oa} implies that for every $K$ and $\g$
$$\sqrt{\ve(I_{\g,K})}=\sqrt{A_{G,K}}.$$

To show the inclusion "$\subset$", it is enough to prove that $\ve(I_{\g,K})\subset \sqrt{A_{G,K}}$.
For each $g\in I_{\g,K},$ the element $\ve(g)$ is a root of the polynomial
$\prod_{w\in W} (x-w\cdot \ve(g))=\sum c_k x^k$ with coefficients
$c_k\in \ve(I_{\g,K}^W)$ for $k=0,...,N-1,$ where $N=|W|$ and $c_N=1.$
Assuming that Conjecture \ref{oa} holds, $\ve(I_{\g,K}^W)\subset A_{G,K},$
Consequently, $\ve(g)^N=-\sum_{k=0}^{N-1} c_k \ve(g)^k\in A_{G,K}$ and $\ve(g)\in\sqrt{A_{G,K}}.$

To show the inclusion "$\supset$", it is enough to prove that $A_{G,K}\subset \sqrt{\ve(I_{\g,K})}.$
Each $h\in A_{G,K}$ is a root of the polynomial
$\prod_{w\in W} (x-w\cdot h)=\sum c_k x^k$ with coefficients
$c_k\in A_{G,K}^W= \sqrt{\ve(I_{\g,K}^W)}.$ Consequently, $h^N=-\sum_{k=0}^{N-1} c_k h^k\in
\sqrt{\ve(I_{\g,K}^W)}.$ Hence, $h\in \sqrt{\ve(I_{\g,K}^W)}.$
\Box

Despite the fact that $A_{G,K}^W$ is equal its nil-radical, $A_{G,K}$ is often not equal to its
nil-radical. Indeed, for the unknot, $(l-l^{-1})^2\in A_{sl(2),U}$ but $(l-l^{-1})\not \in A_{sl(2),U}$!

Note that the conclusion of Theorem \ref{def=char} is stronger than the
AJ-conjecture of Garoufalidis for $\g=sl(2),$ \cite[Conjecture 1]{Ga2}, and
its version for higher rank Lie algebras, \cite[Question 1]{Ga2}.

%%%%%%%%%%%%%%%%%%%%%%%%%%%%%%%%%%%%%%%%%%%%%%%%%%%%%%%%%%%%%%%%%%%%%%%%%%%

\section{Character varieties}
\label{s_char_var}

%%%%%%%%%%%%%%%%%%%%%%%%%%%%%%%%%%%%%%%%%%%%%%%%%%%%%%%%%%%%%%%%%%%%%%%%%%%

\subsection{Introduction} Let $G$ be a complex reductive algebraic group. If $\Gamma$ is a (discrete) group generated by
$\gamma_1,...,\gamma_n$ then the set of homomorphisms $Hom(\Gamma,G)$
can be identified with the set of points $(\rho(\gamma_1),...,\rho(\gamma_n))\in G^n$
taken over all representations $\rho: \Gamma\to G.$ It is an algebraic set which up to an isomorphism
does not depend on the choice of generators of $\Gamma.$
The group $G$ acts on $Hom(\Gamma,G)$ by conjugating representations and
the categorical quotient of that action, $$X_G(\Gamma)=Hom(\Gamma,G)//G$$
is called the $G$-character variety of $\Gamma.$ In simple words $X_G(\Gamma)$ is an algebraic set
together with a map $\pi: Hom(\Gamma,G)\to X_G(\Gamma)$ which is constant on all $G$-orbits and has the universal property that every map $Hom(\Gamma,G)\to Y$ which is constant on all $G$-orbits factors through $\pi.$

If $\Gamma$ is the fundamental group of a topological space $X,$ then $X_G(\Gamma)$ is called the
$G$-character variety of $X$ and it is abbreviated by $X_G(X).$

\begin{proposition}(\cite{S-char})\label{char_gen}
For $G=SL(n,\C),$ $O(n,\C),$ $Sp(2n,\C),$ let $\tau_\gamma : X_G(\Gamma)\to \C$ be defined as
$\tau_\gamma ([\rho])= tr(\rho(\gamma))$ for $\rho: \Gamma\to G\to GL(V),$ where $V$ is the defining
representation of $G$ (The faithful representation of the smallest dimension.)
Then the algebra $\C[X_G(\Gamma)]$ is generated by $\tau_{\gamma}$ for all $\gamma\in \Gamma.$
\end{proposition}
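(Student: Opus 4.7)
The plan is to pass from the character variety to the classical invariant theory of $G$-tuples under simultaneous conjugation, and then invoke the First Fundamental Theorem for the relevant classical group. Concretely, fix generators $\gamma_1,\dots,\gamma_n$ of $\Gamma$ and identify $Hom(\Gamma,G)$ with the closed $G$-invariant subvariety of $G^n$ cut out by the relations of $\Gamma$, via $\rho\mapsto(\rho(\gamma_1),\dots,\rho(\gamma_n))$. The conjugation action on $Hom(\Gamma,G)$ is just the restriction of the diagonal conjugation action on $G^n$, so
\[
\C[X_G(\Gamma)] \;=\; \C[Hom(\Gamma,G)]^{G}
\]
is the image of $\C[G^n]^G$ under restriction. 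It therefore suffices to show that $\C[G^n]^G$ is generated, as a $\C$-algebra, by the functions $(A_1,\dots,A_n)\mapsto tr(w(A_1,\dots,A_n))$, where $w$ ranges over words in the letters $A_i^{\pm 1}$ and $tr$ is taken in the defining representation $V$.

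For $G=SL(n,\C)$ this is precisely Procesi's theorem on matrix invariants (the $SL$-case follows from the $GL$-case since the determinant is identically $1$). For $G=O(n,\C)$ and $G=Sp(2n,\C)$ the First Fundamental Theorem (Sibirskii, Procesi, Aslaksen--Tan--Zhu) says that $\C[G^n]^G$ is generated by traces of words in the $A_i$ together with their orthogonal, respectively symplectic, adjoints $A_i^{\sharp}$. The key observation is that on $O(n,\C)$ one has $A^{\sharp}=A^{T}=A^{-1}$, and on $Sp(2n,\C)$ the defining condition gives $A^{\sharp}=A^{-1}$, so every ``adjoint letter'' in a generating trace can be replaced by an inverse letter. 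In both cases the generating set becomes exactly $\{tr\, w(A_i^{\pm 1})\}$.

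To finish, observe that for the universal representation $A_i=\rho(\gamma_i)$ a word $w(A_1^{\pm 1},\dots,A_n^{\pm 1})$ equals $\rho(\gamma)$ for the corresponding word $\gamma=\gamma_{i_1}^{\epsilon_1}\cdots\gamma_{i_k}^{\epsilon_k}\in\Gamma$. Hence its trace restricts, after passing to the GIT quotient, to the class function $\tau_\gamma$. Pulling the surjection $\C[G^n]^G\twoheadrightarrow \C[X_G(\Gamma)]$ through the generating set yields the desired generation of $\C[X_G(\Gamma)]$ by $\{\tau_\gamma:\gamma\in\Gamma\}$.

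The main obstacle is invoking the invariant theory cleanly in the three cases at once; in the orthogonal and symplectic settings the textbook First Fundamental Theorem is usually phrased in terms of adjoint letters rather than inverse letters, and one must make the (easy but crucial) reduction $A^{\sharp}=A^{-1}$ explicit. A secondary, harmless point is that restriction from $G^n$ to the relation subvariety $Hom(\Gamma,G)\subset G^n$ is surjective on regular functions since the embedding is closed, so generators of $\C[G^n]^{G}$ do restrict to generators of $\C[Hom(\Gamma,G)]^{G}$.
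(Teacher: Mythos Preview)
The paper does not give its own proof of this proposition: it is stated with a citation to \cite{S-char} and no argument is supplied here, so there is nothing in the present paper to compare your attempt against.

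That said, your outline is the standard and correct route to this result, and is almost certainly what the cited reference does. A couple of points worth tightening. First, the surjectivity of $\C[G^n]^G\to\C[Hom(\Gamma,G)]^G$ is not just ``restriction along a closed embedding''; one needs that $G$ is reductive so that taking $G$-invariants is exact (the Reynolds operator splits the surjection $\C[G^n]\twoheadrightarrow\C[Hom(\Gamma,G)]$ $G$-equivariantly). You allude to this but phrase it as a consequence of closedness alone. Second, the First Fundamental Theorems you invoke (Procesi for $GL$, $O$, $Sp$) are stated for tuples in the full matrix space $M(V)^n$, not for $G^n$; the passage works because $G$ is a closed subvariety of $M(V)$ in each of your three cases, and then reductivity again gives surjectivity of $\C[M(V)^n]^G\to\C[G^n]^G$. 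For $SL(n)$ you should also note that conjugation invariants under $SL(n)$ and under $GL(n)$ coincide (the center acts trivially), which is what lets you quote Procesi directly. With these caveats made explicit your argument is complete.
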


Proposition \ref{char_gen} does not hold for $SO(n,\C),$ \cite{S-char}.

Goldman proved that for every complex reductive algebraic group $G$ and any closed orientable surface $F,$
$X_G(F)=X_G(\pi_1(F))$ is a singular holomorphic symplectic manifold. More specifically, let
$X_G^{irr}(F)$ be the set of conjugacy classes of all representations
$\rho: \pi_1(F)\to G$ such that $\rho(\pi_1(F))$ is not contained in a proper connected algebraic subgroup of $G$. $X_G^{irr}(F)$ is an open subset of $X_G(F)$ and a smooth (complex)
manifold. Goldman defines holomorphic symplectic form on $X_G^{irr}(F).$
His construction utilizes the fact that the tangent space $T_{[\rho]} X_G(F)$ at $[\rho]\in X_G^s(F)$ represented by a representation $\rho: \pi_1(F)\to G$ is canonically isomorphic to $H^1(F, Ad_\rho \g).$
Let $B$ be a non-degenerate symmetric bilinear form, $B:\g \times \g \to \C,$
invariant under the adjoint $G$ action. For classical groups, the standard choice is the
trace form, $B(X,Y)= tr(XY),$ where the trace is defined by the embedding $G\subset GL(V)=GL(n,\C),$
for the defining representation $V$ of $G$.
The induced cup product
\begin{equation}\label{omega}
\omega: H^1(F, Ad_\rho \g)\times H^1(F, Ad_\rho \g) \stackrel{\cup}{\longrightarrow} H^2(F,\C)
\stackrel{\cap [F]}{\longrightarrow} \C,
\end{equation}
defines a symplectic form on $T_{[\rho]}X_G^{irr}(F).$
Goldman proves by an argument from gauge theory that
$\omega$ is closed, \cite{Go1}. This is Goldman's symplectic form.

%%%%%%%%%%%%%%%%%%%%%%%%%%%%%%%%%%%%%%%%%%%%%%%%%%%%%%%%%%%%%%%%%%%%%%%%%%

\subsection{Character varieties of the torus}
%%%%%%%%%%%%%%%%%%%%%%%%%%%%%%%%%%%%%%%%%%%%%%%%%%%%%%%%%%%%%%%%%%%%%%%%%%%
Character varieties are usually very difficult to describe as solution
sets of explicit systems of polynomial equations. Even an explicit description of
$X_G(\Z^2)$ is difficult in general, since the number of connected and irreducible components of
this set is unknown. If $G$ is simply connected, for example $G=SL(n,\C)$ or $Sp(n,\C),$
then $X_G(\Z^2)$ is connected, by \cite{Ric}. However, $X_G(\Z^2)$ may be not connected in general:
Points of $X_{G}(\Z^2)$ classify flat principal $G$-bundles over the torus.
If $E_\rho$ is the bundle corresponding to $[\rho]\in X_G(\Z^2)$ then the second
obstruction class to the existence of a global section of $E_\rho$ lies in
$H^2(\Z^2,\pi_1(G))=\pi_1(G),$ with the action of $\Z^2$ on $\pi_1(G)$ given by $\rho,$ c.f. \cite{Go2}.
The obstruction map $F: X_G(\Z^2)\to \pi_1(G)$ is constant on connected components. Goldman
conjectures that $F$ maps bijectively connected components of $X_G(\Z^2)$ onto $\pi_1(G),$ for
all semi-simple algebraic groups, \cite{Go2}.

As before, let $X_G^0(\Gamma)$ be the connected component of the trivial character.
The proof of the following statement appears in \cite{Th}. For the convenience of the reader we
include the proof below.

\begin{theorem}\label{char_var}(\cite{Th})
For any complex reductive algebraic group $G$ and its Cartan subgroup
(a maximal complex torus) $\T$, the map
$$\T^2=Hom(\Z^2,\T)\to Hom(\Z^2,G)\to Hom(\Z^2,G)//G=X_G(\Z^2)$$ factors through
an isomorphism
$\chi: \T^2/W\to X_G^0(\Z^2)$, where the Weyl group $W$ acts diagonally on $\T\times \T.$
\end{theorem}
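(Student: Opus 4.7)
The plan is to verify that $\chi: \T^2/W \to X_G^0(\Z^2)$ is an isomorphism of affine varieties in three stages, corresponding to well-definedness, bijectivity on closed points, and upgrade to an algebraic isomorphism.

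First I would observe that the inclusion $\T \hookrightarrow G$ induces a morphism $\T^2 = Hom(\Z^2, \T) \to Hom(\Z^2, G)$; composing with the quotient to $X_G(\Z^2)$ produces a morphism whose image is connected (since $\T^2$ is) and contains the trivial character, hence lies in $X_G^0(\Z^2)$. The resulting map is $W$-invariant, because if $n \in N_G(\T)$ represents $w \in W$, then conjugation by $n$ carries $(t_1, t_2) \in \T^2$ to $(w \cdot t_1, w \cdot t_2)$ while leaving its image in $X_G(\Z^2)$ unchanged. Hence it descends to the claimed $\chi: \T^2/W \to X_G^0(\Z^2)$.

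For surjectivity of $\chi$, recall that the categorical quotient $X_G(\Z^2)$ parametrizes closed $G$-orbits in $Hom(\Z^2, G)$, and a closed orbit corresponds to a completely reducible representation. Since $\Z^2$ is abelian, complete reducibility forces $\rho(\Z^2)$ to consist of commuting semisimple elements, and by the classical theorem that any finite set of commuting semisimple elements in a reductive group lies in a common maximal torus we may assume after conjugation that $\rho(\Z^2) \subset \T$; so every closed orbit meets $\T^2$, and in particular $\chi$ surjects onto $X_G^0(\Z^2)$. For injectivity modulo $W$, suppose $(t_1, t_2), (s_1, s_2) \in \T^2$ satisfy $s_i = g t_i g^{-1}$ for some $g \in G$. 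Then $\T$ and $g \T g^{-1}$ are both maximal tori in the (reductive) centralizer $Z_G(s_1, s_2)$, hence conjugate by some $h \in Z_G(s_1, s_2)^0$. Then $hg \in N_G(\T)$, and since $h$ centralizes the $s_i$, the pairs $(t_1,t_2)$ and $(s_1,s_2)$ differ by the Weyl element represented by $hg$.

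Finally, to promote this set-theoretic bijection to an isomorphism of affine varieties, I would verify that the induced algebra map $\C[X_G^0(\Z^2)] \to \C[\T^2]^W$ is an isomorphism. This is a two-parameter analog of Chevalley's restriction theorem at the group level: the single-element version $\C[G]^G \cong \C[\T]^W$ is classical, but the commuting variety in $G^2$ may be reducible or non-normal, so one must isolate the component corresponding to the trivial obstruction class and confirm that $G$-invariant regular functions on it separate closed orbits exactly as $W$-invariant functions separate points of $\T^2$. This is the main obstacle of the argument. Once the restriction isomorphism is in hand, normality of $\T^2/W$ together with the bijectivity established above yields that $\chi$ is an isomorphism by Zariski's main theorem.
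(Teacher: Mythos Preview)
Your injectivity argument is essentially the paper's, and your awareness that bijectivity on closed points must be upgraded to an isomorphism of varieties is a point the paper's proof actually glosses over. The genuine gap is in your surjectivity step. The ``classical theorem that any finite set of commuting semisimple elements in a reductive group lies in a common maximal torus'' is false when $G$ is not simply connected. A standard counterexample: in $G=SO(3,\C)\cong PSL(2,\C)$ the rotations by $\pi$ about two orthogonal axes are commuting semisimple elements of order two, yet no one-dimensional maximal torus of $SO(3,\C)$ contains both. Equivalently, in $PSL(2,\C)$ the images of $\mathrm{diag}(i,-i)$ and $\left(\begin{smallmatrix}0&1\\-1&0\end{smallmatrix}\right)$ commute (they anticommute upstairs in $SL(2,\C)$) but lie in no common torus. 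Notice that if your argument worked it would show $\chi$ surjects onto \emph{all} of $X_G(\Z^2)$, contradicting the disconnectedness of $X_G(\Z^2)$ for non-simply-connected $G$ discussed just above the theorem in the paper.

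The paper circumvents this by never claiming that an arbitrary closed orbit meets $\T^2$. Instead it argues locally: near $[h,e]$ for $h\in\T$ regular semisimple, every nearby class $[h',k]$ has $h'$ conjugate into $\T$ and then $k$ forced into $\T$ since $\T=Z_G(h')$. This gives a complex-open set in the image of $\chi$, hence Zariski density; then Richardson's theorem that $Hom^0(\Z^2,G)$ is irreducible is invoked to conclude surjectivity onto $X_G^0(\Z^2)$. If you want to repair your line of argument, you must either restrict to simply connected $G$ (where Steinberg's connectedness theorem does give the ``common torus'' statement) or find an independent reason why closed orbits \emph{in the identity component} meet $\T^2$---and the latter is essentially the content of the theorem.
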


\begin{proof}
Let us first give an elementary proof that $\chi$ is onto for $G=GL(n,\C)$ and $SL(n,\C)$.
For any commuting matrices
$g_1,g_2\in G$ there is $h\in G$ such that $hg_1h^{-1},hg_2h^{-1}$ are upper triangular.
Furthermore, $h$ can be chosen so that the entries of $hg_1h^{-1},hg_2h^{-1}$ above diagonal
are uniformly arbitrarily small. Therefore, every representation $\rho:\Z^2\to G$
has an infinite sequence of conjugates approaching some $\phi: \Z^2\to \T\subset G$ in the classical
topology and, hence, in the Zariski topology as well. Since all points are closed in $X_G(\Z^2),$ every
point in it is represented by some $\phi: \Z^2\to \T.$

In the proof of $\chi$ being onto for every reductive $G$ we follow an argument of \cite{Th}.
Denote the equivalence class of $\rho:\Z^2\to G$ in $X_G(\Z^2)$ by $[g_1,g_2]$, where
$g_1=\rho(1,0),$ $g_2=\rho(0,1).$
Let $h$ be a regular semisimple element of $G$ and let $\T$ be its centralizer.
Then every element in some open neighborhood of $h$ in $G$ is conjugated to a regular
element in $\T.$ Hence $[h,e]$ has an open neighborhood $U\subset X_G(\Z^2)$ in complex topology
whose every element is represented by $[gh'g^{-1},k]=[h',g^{-1}kg],$ where $h'\in \T$ and $h'$ is regular.
Since $\T$ is the centralizer of $h',$ $g^{-1}kg\in \T.$ Hence we proved that the image of $\chi$ contains
the open set $U$ in complex topology. Consequently, the image of $\chi$ is dense in Zariski topology.
However, by \cite{Ric}, the connected component $Hom^0_G(\Z^2)$ of the trivial homomorphism in $Hom(\Z^2,G)$
is irreducible for every connected reductive group $G.$ Therefore, $\chi$ is onto.

To prove that $\chi$ is $1$-$1,$ we need to show that if $g_1,...,g_N, g_1',...,g_N'\in \T$ and
$(g_1',...,g_N')=g(g_1,...,g_N)g^{-1}$ for some $g\in G$ then
$(g_1',...,g_N')=w(g_1,...,g_N),$ for some $w\in W$ acting on $\T$.
We follow an argument of Borel, \cite{Bo}, and Thaddeus, \cite{Th}:
The centralizer of $g_1,...,g_N,$ $Z(g_1,...,g_n)\subset G$ is a reductive group
by \cite[26.2A]{Hu} since the proof there is valid not only for a subtorus but for any
subset. $\T$ and $g^{-1}\T g$ are maximal tori in $Z(g_1,...,g_n)$
and, therefore, $\T$ is conjugate to $g^{-1}\T g$ by some $h\in Z(g_1,...,g_n).$
Then $hg\in N(\T)$ represents $w\in W$ which sends $(g_1,...,g_N)$ to $(g_1',...,g_N').$
\end{proof}

More generally, one can prove that $\chi: \T^n/W\to X_G^0(\Z^n)$ is an embedding for every $n.$

Let $\Lambda_\g$ be the weight lattice of the Lie algebra $\g$ of $G.$
Since every weight $\alpha\in \Lambda_\g$ is a homomorphism $\alpha: \T\to \C^*$ and a regular
function on $\T$, there is an natural map $L: \Lambda_\g\to \C[\T].$ Extending it
additively to the group ring of $\Lambda_\g$ we get a $\C$-algebra homomorphism
$L: \C[\Lambda_\g]\to \C[\T].$

\begin{lemma}
$L: \C[\Lambda_\g]\to \C[\T]$ is an isomorphism of $\C$-algebras.
\end{lemma}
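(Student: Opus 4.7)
The plan is to reduce the lemma to the standard identification of the coordinate ring of an algebraic torus with the group algebra of its character lattice. Since $\Lambda_\g$ is a free abelian group of rank $n=\mathrm{rank}\,\g$, the first step is to fix a $\Z$-basis $\alpha_1,\ldots,\alpha_n$ of $\Lambda_\g$. This identifies $\C[\Lambda_\g]$ with the Laurent polynomial ring $\C[x_1^{\pm 1},\ldots,x_n^{\pm 1}]$, the basis element corresponding to $\sum m_i\alpha_i$ going to the monomial $x_1^{m_1}\cdots x_n^{m_n}$.

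Each $\alpha_i:\T\to \C^*$ is a morphism of algebraic groups, so together they assemble into a homomorphism $\Phi=(\alpha_1,\ldots,\alpha_n):\T\to(\C^*)^n$. Its comorphism $\Phi^*:\C[(\C^*)^n]=\C[y_1^{\pm 1},\ldots,y_n^{\pm 1}]\to\C[\T]$ sends $y_i$ to $\alpha_i\in\C[\T]$, so under the identifications $y_i\leftrightarrow x_i$ above, $\Phi^*$ coincides with $L$. Thus the whole lemma reduces to showing that $\Phi$ is an isomorphism of algebraic groups. Both source and target are connected algebraic tori of dimension $n$, and $\Phi$ is a group homomorphism between them, so it suffices to show $\ker\Phi$ is trivial. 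But $\ker\Phi=\{t\in\T:\alpha(t)=1\ \text{for all}\ \alpha\in\Lambda_\g\}$, and characters separate points on an algebraic torus, so this kernel is trivial provided the $\alpha_i$ generate the full character lattice $X^*(\T)$. Since a bijective morphism between algebraic tori of the same dimension is an isomorphism, this will finish the proof.

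The only real obstacle is the last point: identifying $\Lambda_\g$ with $X^*(\T)$. In general for a complex reductive $G$ one has the inclusions of lattices from the root lattice up to $\Lambda_\g$, with $X^*(\T)$ somewhere in between, and $X^*(\T)=\Lambda_\g$ exactly when $G$ is simply connected. The statement of the lemma already presupposes this agreement — otherwise a weight $\alpha\in\Lambda_\g$ need not define a regular function on $\T$ and the map $L$ is not well defined as written. Accepting this (as is needed for the map $L$ to exist), the argument above goes through verbatim and $L$ is a $\C$-algebra isomorphism.
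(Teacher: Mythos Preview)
Your argument is correct and close in spirit to the paper's, but the packaging differs. The paper treats surjectivity and injectivity separately: for surjectivity it takes the weights $\alpha_1,\ldots,\alpha_n$ of a faithful representation of $G$ (not a $\Z$-basis of $\Lambda_\g$), observes that the common kernel of these characters is trivial so $(\alpha_1,\ldots,\alpha_n)$ embeds $\T$ into $(\C^*)^n$, and concludes that the $\alpha_i$ generate $\C[\T]$; for injectivity it notes that $L$ sends $\Lambda_\g$ into the unit group $(\C[\T])^*$, and since $\C[\T]$ is a Laurent polynomial ring its monomials are linearly independent. You instead fix a $\Z$-basis of $\Lambda_\g$, identify $L$ with the comorphism $\Phi^*$ of $\Phi:\T\to(\C^*)^n$, and argue that $\Phi$ is an isomorphism of algebraic tori. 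Your route is a bit more conceptual and dispatches both directions at once; the paper's route is slightly more hands-on but avoids invoking the structure theory of tori. One small gap in your write-up: you pass from ``$\ker\Phi$ trivial'' to ``bijective'' without comment; you should say that an injective homomorphism of algebraic groups over $\C$ is a closed immersion, and a closed connected subgroup of $(\C^*)^n$ of dimension $n$ is all of $(\C^*)^n$.

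Your observation about simply-connectedness is a genuine point the paper glosses over: the map $L$ as described only makes sense when every $\alpha\in\Lambda_\g$ exponentiates to a character of $\T$, i.e.\ when $X^*(\T)=\Lambda_\g$, which forces $G$ simply connected (for semisimple $G$). The paper's subsequent Corollary, that $X_G^0(\Z^2)$ depends only on $\g$, relies on this identification. Your handling---accepting the hypothesis implicit in the lemma's statement---is the right call here.
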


\begin{proof}
Let $\alpha_1,...,\alpha_n: \T\to \C^*$ be weights of a faithful representation $V$ of $G.$
Since the intersection of the kernels of these group homomorphisms is trivial,
$(\alpha_1,...,\alpha_n)$ embeds $\T$ into $(\C^*)^n.$ Consequently, $\alpha_i$'s generate
$\C[\T]$ and $L$ is onto. To show that $L$ is $1-1$, note that $L$ embeds the group $\Lambda_\g$
into the multiplicative group $(\C[\T])^*$ of the ring $\C[\T].$
Since $\C[\T]\simeq \C[x_1^{\pm 1},...,x_n^{\pm 1}],$ the elements of $(\C[\T])^*=$
\mbox{$\langle x_1^{\pm 1},...,
x_n^{\pm 1}\rangle$} are linearly independent in $\C[\T].$ Hence $L$ is $1-1.$
\end{proof}

Consequently, $L\otimes L$ is an isomorphism between $\C[\Lambda_\g^2]=
\C[\Lambda_\g]\otimes \C[\Lambda_\g]$ and $\C[\T^2]$
restricting to an isomorphism
%\begin{equation}\label{theta}
%\Theta:
$\C[\Lambda_\g^2]^W\to \C[\T^2]^W=\C[\T^2/W].$
%\end{equation}

\begin{corollary}\label{x_0}
$\C[X_G^0(\Z^2)]\simeq \C[\Lambda_\g^2]^W.$ Consequently, the algebraic variety $X_G^0(\Z^2)$
depends on the Lie algebra of $G$ only.
\end{corollary}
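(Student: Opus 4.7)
The plan is to assemble the two results just established. Theorem \ref{char_var} supplies an isomorphism of algebraic varieties $\chi:\T^2/W\to X_G^0(\Z^2)$, which induces a $\C$-algebra isomorphism $\chi^*:\C[X_G^0(\Z^2)]\xrightarrow{\sim}\C[\T^2/W]=\C[\T^2]^W$. By the preceding lemma, $L:\C[\Lambda_\g]\to\C[\T]$ is an isomorphism of $\C$-algebras, so tensoring gives an isomorphism $L\otimes L:\C[\Lambda_\g^2]\to\C[\T^2]$. It remains to check that $L\otimes L$ intertwines the diagonal $W$-actions on the two sides; once this is done, passing to $W$-invariants and composing with $(\chi^*)^{-1}$ yields the desired isomorphism $\C[\Lambda_\g^2]^W\simeq\C[X_G^0(\Z^2)]$.

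For the equivariance, recall that $W=N_G(\T)/\T$ acts on $\T$ by conjugation, and the contragredient action on $\mathrm{Hom}(\T,\C^*)\supset \Lambda_\g$ is by definition the standard $W$-action on the weight lattice used throughout the paper. Since $L$ sends $\alpha\in\Lambda_\g$ to the regular function $t\mapsto \alpha(t)$, we have $L(w\cdot\alpha)(t)=\alpha(w^{-1}tw)=(w\cdot L(\alpha))(t)$, so $L$ is $W$-equivariant; tensoring preserves this equivariance and therefore also restriction to invariants. For the final assertion, both the weight lattice $\Lambda_\g$ and the Weyl group $W$ together with its action on $\Lambda_\g$ are recovered from the root system of $\g$. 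Hence $\C[\Lambda_\g^2]^W$ depends only on the Lie algebra $\g$, and therefore so does the affine variety $X_G^0(\Z^2)=\mathrm{Spec}\,\C[\Lambda_\g^2]^W$.

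The main obstacle, such as it is, is purely notational: one must be careful that the $W$-action on $\Lambda_\g$ coming from conjugation on $\T$ is identified with the abstract $W$-action on the weight lattice used earlier in the paper (for example in defining $\A_\g^W$). This identification is the standard one, so no real work is needed beyond the contents of Theorem \ref{char_var} and the preceding lemma; the corollary is essentially a direct concatenation of these two statements.
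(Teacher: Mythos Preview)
Your proof is correct and follows the same route as the paper: combine Theorem~\ref{char_var} with the lemma identifying $\C[\Lambda_\g]\simeq\C[\T]$ via $L$, tensor, and pass to $W$-invariants. The paper states the corollary as an immediate consequence of these two ingredients without spelling out the $W$-equivariance of $L\otimes L$; your explicit verification of that point is a welcome clarification but not a departure from the paper's argument.
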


\begin{example}\label{example_sln}
If $G=SL(n,\C)$ then $\T=\{(x_1,...,x_n)\in (\C^*)^n: x_1\cdot ...\cdot x_n=1\}$ and
$X_{SL(n,\C)}(T)=\T^2/S_n$ where
$\sigma(x_1,...,x_n,y_1,...,y_n)=(x_{\sigma(1)},...,x_{\sigma(n)},
y_{\sigma(1)},..., y_{\sigma(n)}),$ for $\sigma\in S_n.$
\end{example}

\begin{corollary} For $G=SL(n,\C),$ $SO(n,\C)$, $O(n,\C),$ $Sp(2n,\C),$
the algebra $\C[X_G^0(\Gamma)]$ is generated by $\tau_{\gamma}$ for all $\gamma\in \Gamma.$
\end{corollary}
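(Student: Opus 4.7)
The plan is to split into cases. For $G\in\{SL(n,\C),O(n,\C),Sp(2n,\C)\}$ the statement is a formal consequence of Proposition~\ref{char_gen}. Since $X_G^0(\Gamma)$ is a union of connected components of the affine variety $X_G(\Gamma)$, its coordinate ring is a direct factor of $\C[X_G(\Gamma)]$ and the restriction map $\C[X_G(\Gamma)]\twoheadrightarrow \C[X_G^0(\Gamma)]$ is surjective. The restriction of a trace function $\tau_\gamma$ is again the trace function $\tau_\gamma$ on $X_G^0(\Gamma)$, so since the $\tau_\gamma$'s generate the larger ring they also generate the smaller one.

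For $G=SO(n,\C)$ the argument fails because Proposition~\ref{char_gen} is explicitly false in this case, so I would compare with $O(n,\C)$ via the inclusion $SO(n,\C)\hookrightarrow O(n,\C)$. The determinant $\rho\mapsto\det\circ\rho$ defines a continuous map $Hom(\Gamma,O(n,\C))\to Hom(\Gamma,\{\pm 1\})$ to a discrete set, so the connected component of the trivial representation in $Hom(\Gamma,O(n,\C))$ consists of representations with trivial determinant; combined with the obvious reverse inclusion, this gives $Hom^0(\Gamma,SO(n,\C))=Hom^0(\Gamma,O(n,\C))$. Taking GIT quotients produces a surjection $X_{SO(n)}^0(\Gamma)\twoheadrightarrow X_{O(n)}^0(\Gamma)$ exhibiting the latter as the quotient of the former by the residual $\Z/2\cong O(n,\C)/SO(n,\C)$ acting on $X_{SO(n)}^0(\Gamma)$ by conjugation with any chosen $g_0\in O(n,\C)\setminus SO(n,\C)$.

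The key step is to show that this $\Z/2$-action on $X_{SO(n)}^0(\Gamma)$ is trivial, for then $\C[X_{SO(n)}^0(\Gamma)]=\C[X_{O(n)}^0(\Gamma)]$ and the claim reduces to the $O(n,\C)$ case already handled. My plan is to exhibit, for each $\rho$ in the identity component, an element $h(\rho)\in SO(n,\C)$ with $g_0\rho g_0^{-1}=h(\rho)\rho h(\rho)^{-1}$, either via a Luna slice argument in a neighborhood of the fixed trivial character (reducing to a linear question on $H^1(\Gamma,\mathfrak{so}(n,\C))$, where the outer $\Z/2$ is implemented inner-ly by an element of $SO(n,\C)$) or by propagating the construction along an algebraic arc from the trivial character to $\rho$ using semi-continuity of stabilizers. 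The main obstacle is precisely this triviality: on the full variety $X_{SO(n)}(\Gamma)$ the outer $\Z/2$-action is generically non-trivial (for $n=2k$ it corresponds to the index-$2$ inclusion $W(D_k)\subset W(B_k)$ of Weyl groups), so the proof must make essential use of the distinguished role of the trivial character in the definition of $X_{SO(n)}^0(\Gamma)$.
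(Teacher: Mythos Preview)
Your treatment of $SL(n,\C)$, $O(n,\C)$, and $Sp(2n,\C)$ matches the paper's exactly: the closed embedding $X_G^0(\Gamma)\hookrightarrow X_G(\Gamma)$ gives a surjection on coordinate rings, and Proposition~\ref{char_gen} finishes.

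For $SO(n,\C)$ your key step is false when $n=2k$ is even, $k\ge 2$. By Theorem~\ref{char_var} the identity component $X_{SO(2k)}^0(\Z^2)$ is already the entire irreducible variety $\T^2/W(D_k)$, and the residual $\Z/2=O(2k)/SO(2k)$ acts on it precisely by the extra Weyl reflection in $W(B_k)\setminus W(D_k)$; the quotient is $\T^2/W(B_k)$, which is strictly smaller. Concretely, for a generic $\rho:\Z^2\to\T\subset SO(2k)$ and $g_0\in N_{O(2k)}(\T)\setminus SO(2k)$, the representations $\rho$ and $g_0\rho g_0^{-1}$ lie in distinct $W(D_k)$-orbits in $\T^2$, hence are \emph{not} $SO(2k)$-conjugate even though they are $O(2k)$-conjugate. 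So no choice of $h(\rho)\in SO(2k)$ exists, and restricting to the component of the trivial character does not help because that component is all of $\T^2/W(D_k)$. Your Luna-slice heuristic fails for the same reason: at the trivial representation the tangent model is $H^1(\Gamma,\mathfrak{so}(2k))$ with the \emph{outer} action of $\Z/2$, which is the Dynkin diagram automorphism of $D_k$ and is not inner; it genuinely permutes $SO(2k)$-orbits on $\mathfrak{so}(2k)$.

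The paper does not attempt this route. For $SO(n,\C)$ it simply appeals to Corollary~\ref{x_0}, i.e.\ to the explicit identification $\C[X_G^0(\Z^2)]\simeq\C[\Lambda_\g^2]^W$; note in particular that this argument is specific to $\Gamma=\Z^2$, the only case needed in the paper, whereas your proposed reduction was aimed at general $\Gamma$.
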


\begin{proof} Since the embedding $X_G^0(\Gamma)\hookrightarrow X_G(\Gamma)$ induces an epimorphism
$\C[X_G(\Gamma)]\to \C[X_G^0(\Gamma)],$ the statement follows immediately from Proposition \ref{char_gen} for $G=SL(n,\C),$ $O(n,\C),$ $Sp(2n,\C).$ For $G=SO(n,\C)$ the statement follows Corollary \ref{x_0}.
\end{proof}

%%%%%%%%%%%%%%%%%%%%%%%%%%%%%%%%%%%%%%%%%%%%%%%%%%%%%%%%%%%%%%%%%%%%%%%%
%

\section{Deformation-quantizations}\label{s_def_quant}

%
%%%%%%%%%%%%%%%%%%%%%%%%%%%%%%%%%%%%%%%%%%%%%%%%%%%%%%%%%%%%%%%%%%%%%%%%

If $(M,\omega)$ is a holomorphic symplectic manifold then the space $\cH(M)$ of holomorphic
functions on it is a Poisson algebra, i.e. a commutative algebra together with a Poisson bracket: if $f,g\in \cH(M)$ then
\begin{equation}\label{hamiltonian_bracket}
\{f,g\}=-V_f(g),
\end{equation}
where $V_f$ is the hamiltonian vector field of $f$ defined by condition
$\omega(V_f,W)=W(f)$ for every vector field $W$.

Theorem \ref{G-brack} in Sec. \ref{s_Goldman} implies:

\begin{remark}\label{remark}
For every classical group $G$ and for every closed orientable surface $F$, the Poisson bracket on $\cH(X_G(F))$ restricts to
a Poisson bracket on $\C[X_G(F)].$
\end{remark}

For any $\C$-subalgebra $R\subset \C[[h]],$ let $\C_0$ be $\C$ considered as an $R$-module via
the homomorphism $\ve: R\to \C,$ $\ve(h)=0.$ Let $B$ be an associative $R$-algebra, such that
the $R$-submodule of $B$ generated by $Ker \ve$ is an ideal. In this case, $B/(Ker \ve)=B\otimes_R \C_0$. Let us assume that this ring is commutative.

Since $(B\otimes_R \C[[h]])/(h)=B\otimes_R \C_0,$
for any $x,y\in B,$ $x\cdot y-y\cdot x$ is divisible by $h$ in $B\otimes_R \C[[h]]$
and, consequently,
\begin{equation}\label{Poisson}
\{x,y\}= \frac{1}{h}\left(x'\cdot y'-y'\cdot x'\right) +h B\otimes_R \C[[h]]
\end{equation}
defines a unique element in $B\otimes_R \C_0.$
It is easy to check that $\{x,y\}$ depends on the coset values of $x$ and $y$ in $B\otimes_R \C_0$ only
and, therefore, $\{\cdot,\cdot\}$ descends to a bracket on $B\otimes_R \C_0.$ Furthermore, it
is a Poisson bracket.

Let $A$ be a commutative algebra with a Poisson bracket $\{\cdot, \cdot\}: A\times A\to A.$ We say that
$B$ as above is a deformation quantization of $A$ in the direction of $\{\cdot, \cdot\}$ if there is
an isomorphism of Poisson algebras $\Psi:B\otimes_R \C_0\to A.$

(Often, deformation-quantization is defined more restrictively with the conditions:
$R=\C[[h]]$ and $B$ is topologically $R$-free.)

%Given an associative $R$-algebra $B$, we denote
%the $h$-adic completion of $B\otimes_R \C[[h]]$ by $B^{h\text{-adic}}$.
%We say that $B$ together with an isomorphism of $\C$-algebras $\Psi: B\otimes_R \C_0 \to A$ is a
%deformation-quantization of $A$ in the direction of $\{\cdot,\cdot\})$ if
%
%\begin{enumerate}
%\item $B^{h\text{-adic}}$ is topologically free
%(i.e. it is isomorphic to $V\otimes_\C \C[[h]],$ for some vector space $V$),
%\item for any $a,b\in B,$ $a\cdot b-b\cdot a$ is divisible by $h$ and
%$\Psi\left(\frac{a\cdot b-b\cdot a}{i h}\right)= \{\Psi(a),\Psi(b)\}.$
%\end{enumerate}
%
%Often deformation-quantization is defined in a more narrow sense as
%the $\C[[h]]$-algebra $B^{h\text{-adic}}$ only.
%Note that any free $R$-module $B$ satisfies condition (1).

Given the embedding $R=\C[q^{\pm 1/D(\g)}]\subset \C[[h]],$ $q=e^h,$
$\C_0$ is the $\C[q^{\pm 1/D(\g)}]$-module $\C$ via the homomorphism $q\to 1.$
There is an isomorphism $$\eta: \A_\g\otimes \C_0\to \C[\Lambda_\g^2],$$
$\eta(E_\alpha)=(\alpha,0),$ $\eta(Q_\alpha)=(0,\alpha),$ for $\alpha\in \Lambda_\g,$
which restricts to an isomorphism $$\eta: \A_\g^W\otimes \C_0\to \C[\Lambda_\g^2]^W.$$

Therefore,
\begin{equation}\label{isoms}
\Theta: \A_\g^W\otimes \C_0\stackrel{\eta}{\longrightarrow} \C[\Lambda_\g^2]^W
%\stackrel{\Theta}{
\longrightarrow \C[\T^2/W]\simeq \C[X_G^0(\Z^2)]
\end{equation}
is an isomorphism as well.

%Since $\C[q^{\pm 1/D(\g)}]$ is a principal ideal domain and $\A_\g$ is a free $\C[q^{\pm %1/D(\g)}]$-module,
%also $\A_\g^W$ is a free $\C[q^{\pm 1/D(\g)}]$-module.

\begin{corollary}\label{AgW_quant0}
$\A_\g^W$ together with (\ref{isoms}) is a deformation-quantization
of $\C[X_G^0(\Z^2)].$
\end{corollary}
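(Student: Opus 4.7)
The plan is to verify, for $R=\C[q^{\pm 1/D(\g)}]\subset \C[[h]]$ with $q=e^h$ and $B=\A_\g^W$, the three requirements listed just before the corollary: (i) the $R$-submodule of $\A_\g^W$ generated by $\ker\ve$ is a two-sided ideal, (ii) the quotient $\A_\g^W\otimes_R \C_0$ is commutative, and (iii) the map $\Theta$ of (\ref{isoms}) is a $\C$-algebra isomorphism. Since the Poisson bracket on $\A_\g^W\otimes_R\C_0$ is then automatic from (\ref{Poisson}), the corollary will follow once (i)--(iii) are in hand.

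Item (i) is immediate: $\ker\ve\subset R$ is the principal ideal generated by the scalar $q^{1/D(\g)}-1$, which is central in $\A_\g$ and hence in $\A_\g^W$, so the $R$-submodule it generates is automatically two-sided. For (ii) I would invoke the explicit presentation of $\A_\g$ from Proposition \ref{presentation}. Imposing $q=1$ collapses the cross relation of (\ref{qWeyl_rel}) to the commutation of the $E_\alpha$'s with the $Q_\beta$'s, while the multiplicative relations $E_\alpha E_\beta=E_{\alpha+\beta}$, $Q_\alpha Q_\beta=Q_{\alpha+\beta}$ persist. Consequently $\A_\g\otimes_R \C_0$ admits exactly the presentation of the group algebra $\C[\Lambda_\g\oplus\Lambda_\g]=\C[\Lambda_\g^2]$, which is precisely the content of $\eta$ being an isomorphism. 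Since the Weyl group acts trivially on $R$ and $|W|$ is invertible in $\C$, the averaging projector $\frac{1}{|W|}\sum_{w\in W}w$ makes taking $W$-invariants exact, so it commutes with the quotient by the central $R$-ideal $(\ker\ve)\cdot \A_\g$. This yields $\A_\g^W\otimes_R\C_0\cong\C[\Lambda_\g^2]^W$ and hence commutativity.

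For (iii), $\Theta$ is defined in (\ref{isoms}) as the composition of the isomorphism $\eta:\A_\g^W\otimes_R\C_0\to\C[\Lambda_\g^2]^W$ just established, the isomorphism $L\otimes L:\C[\Lambda_\g^2]^W\to\C[\T^2/W]$ already proved in Section \ref{s_char_var}, and the isomorphism $\C[\T^2/W]\cong\C[X_G^0(\Z^2)]$ of Theorem \ref{char_var} and Corollary \ref{x_0}; each factor is a $\C$-algebra isomorphism, so $\Theta$ is one as well.

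The main obstacle is (ii): one must rule out additional, unexpected relations surviving in $\A_\g\otimes_R\C_0$ beyond those read off from (\ref{qWeyl_rel}). This reduces to knowing that $\A_\g$ is a free $R$-module on $\{Q_\alpha E_\beta\}_{\alpha,\beta\in\Lambda_\g}$, which is exactly the linear independence established in Lemma \ref{lin_indep} and used in the proof of Proposition \ref{presentation}. Once freeness is granted, specialization at $q=1$ is transparent and the $W$-invariance step is painless. I emphasize that this corollary asserts only the deformation-quantization structure; identification of the resulting Poisson bracket with Goldman's is the separate content of Theorem \ref{AgW_quant}, which will require a different argument.
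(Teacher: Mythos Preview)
Your proposal is correct and follows the same route as the paper: the paper treats this corollary as an immediate consequence of the preceding paragraph, simply asserting that $\eta:\A_\g\otimes\C_0\to\C[\Lambda_\g^2]$ is an isomorphism restricting to one on $W$-invariants, and then composing with Corollary~\ref{x_0}. You have filled in precisely the details the paper leaves implicit---freeness via Lemma~\ref{lin_indep}, commutation of $(-)^W$ with the central quotient via the averaging idempotent, and verification of the ideal condition---but the architecture is identical.
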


Now Theorem \ref{AgW_quant_s} can be stated more precisely as

\begin{theorem}\label{AgW_quant} (Proof in Sec. \ref{s_poisson})
For classical Lie algebras, $\g=sl(n), sp(2n,\C),$ and $so(n,\C),$
the above deformation-quantization is in the direction of the Goldman bracket.\footnote{
By Remark \ref{remark}, the Poisson bracket on $\C[X_G(T)]$ is well defined.}
\end{theorem}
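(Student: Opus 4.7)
The plan is to compute both Poisson brackets explicitly on the spanning family of monomials $E_\alpha Q_\beta$ of $\A_\g \otimes \C_0 \cong \C[\Lambda_\g^2]$ (available by Lemma~\ref{lin_indep}), verify their agreement there, and then descend to the $W$-invariant subalgebras.  Since the isomorphism $\Theta$ of (\ref{isoms}) factors through $\C[\Lambda_\g^2]$ and both brackets turn out to be $W$-equivariant, this reduction is legitimate.

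For the quantization bracket, the relation $E_\alpha Q_\beta = q^{(\alpha,\beta)} Q_\beta E_\alpha$ from (\ref{qWeyl_rel}) yields by a short computation
\[
E_\alpha Q_\beta \cdot E_\gamma Q_\delta - E_\gamma Q_\delta \cdot E_\alpha Q_\beta = \bigl(q^{-(\gamma,\beta)} - q^{-(\alpha,\delta)}\bigr)\, E_{\alpha+\gamma} Q_{\beta+\delta}.
\]
Writing $q = e^h$ and applying (\ref{Poisson}), I obtain on $\A_\g \otimes \C_0$ the Poisson bracket
\[
\{E_\alpha Q_\beta,\, E_\gamma Q_\delta\} = \bigl((\alpha,\delta) - (\gamma,\beta)\bigr)\, E_{\alpha+\gamma} Q_{\beta+\delta}.
\]

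For the Goldman bracket, I would use that for a generic $\rho: \Z^2 \to \T$ the adjoint decomposition $\g = \h \oplus \bigoplus_\alpha \g_\alpha$ is a decomposition of $\Z^2$-modules whose root summands contribute trivially to $H^1(T^2, Ad_\rho\, \g)$, leaving $T_{[\rho]} X_G^0(\Z^2) \cong H^1(T^2, \h) = \h \oplus \h$ via the basis of $H^1(T^2)$ dual to $(L, M)$.  With $B$ the trace form in the defining representation of $G$, (\ref{omega}) restricts to $\omega((\xi_1, \eta_1), (\xi_2, \eta_2)) = B(\xi_1, \eta_2) - B(\xi_2, \eta_1)$; for the characters $f_{\alpha, \beta} = t_1^\alpha t_2^\beta$ on $\T^2$ the Hamiltonian vector field is $f_{\alpha, \beta}(\tilde\beta, -\tilde\alpha)$, where $\tilde\lambda \in \h$ is the $B$-dual of $\lambda \in \h^*$, whence
\[
\{f_{\alpha, \beta},\, f_{\gamma, \delta}\} = \bigl(B(\tilde\alpha, \tilde\delta) - B(\tilde\gamma, \tilde\beta)\bigr)\, f_{\alpha+\gamma, \beta+\delta}.
\]

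The two formulas coincide as soon as the paper's $(\cdot, \cdot)$ on $\h^*$ (normalized so $(\lambda, \lambda) = 2$ on short roots) agrees with the dual of $B$, which is immediate for $sl(n)$ from (\ref{killing_sln}) and reduces to a direct root-length check for $sp(2n)$, $so(2n+1)$, and $so(2n)$.  Both brackets are manifestly $W$-equivariant --- for the quantization from the $W$-action on $\A_\g$ defined in Section~\ref{ss_Weyl_act}, for Goldman because $W$ acts by isometries on $(\h, B)$ --- so they descend to the same Poisson bracket on $\C[\Lambda_\g^2]^W \cong \C[X_G^0(\Z^2)]$.  The main obstacle is precisely the normalization comparison, which is also the reason for restricting to classical algebras: for exceptional $\g$ there is no canonical \emph{defining representation} to fix $B$, and one would have to prescribe an explicit rescaling of the Killing form for the two brackets to agree on the nose.
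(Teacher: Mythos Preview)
Your approach is genuinely different from the paper's.  Rather than invoking Goldman's explicit trace-function identities (Proposition~\ref{G-brack}, taken from \cite{Go2}) and matching them case-by-case against the quantization bracket evaluated on the specific elements $\Theta^{-1}(\tau_{a,b})$ --- which is what the paper does in Propositions~\ref{brack_sln} and~\ref{brack_spso} --- you lift both brackets to the full lattice algebra $\C[\Lambda_\g^2]\simeq\C[\T^2]$, compare them on monomials, and only then pass to $W$-invariants.  This is more uniform and makes transparent why the argument is restricted to classical types: the comparison boils down to identifying the form $(\cdot,\cdot)$ that defines $\A_\g$ with the dual $B^*$ of the form $B$ that defines Goldman's $\omega$.

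There is, however, a genuine gap precisely at that identification.  For $sp(2n)$, $so(2n)$, and $so(2n+1)$ in the realization of Proposition~\ref{brack_spso} (with $H_i=E_{ii}-E_{n+i,n+i}$), the trace form in the defining representation satisfies $B(H_i,H_j)=2\delta_{ij}$, so its dual is $B^*(\alpha_i,\alpha_j)=\tfrac{1}{2}\delta_{ij}$.  This is \emph{not} the paper's $(\alpha_i,\alpha_j)=\delta_{ij}$ of (\ref{sp_kill}); the two differ by a factor of~$2$.  Hence your ``direct root-length check'' actually fails, and your two monomial formulas disagree by that factor in the orthogonal and symplectic cases.  (Sanity check: for $SL(2,\C)=Sp(2,\C)$ your $sl(n)$ computation via (\ref{killing_sln}) and your $sp(2n)$ computation must give the same bracket; with $B=\mathrm{tr}$ in both, only the former matches $(\cdot,\cdot)$ on the nose.)  The paper sidesteps this by never comparing the bilinear forms directly: it quotes Goldman's formulas (\ref{spso_brack}), (\ref{spso_tor_brack}) with the normalization already absorbed, and verifies them against the quantization bracket computed from (\ref{sp_kill}).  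To salvage your route you must either rescale $B$ in the orthogonal and symplectic cases or carry the factor through and justify that it is the one intended by Goldman's construction --- that is exactly the ``normalization comparison'' you flagged as the main obstacle, and it cannot be dismissed as a root-length check.
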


We conjecture that the above statement holds for the exceptional Lie algebras $\g$ as well.

%%%%%%%%%%%%%%%%%%%%%%%%%%%%%%%%%%%%%%%%%%%%%%%%%%%%%%%%%%%%%%%%%%%%%%%%

\section{Goldman bracket}
\label{s_Goldman}

%%%%%%%%%%%%%%%%%%%%%%%%%%%%%%%%%%%%%%%%%%%%%%%%%%%%%%%%%%%%%%%%%%%%%%%

\begin{proposition}\label{G-brack}
(1) For $G=SL(n,\C)$ the Goldman bracket is given by
\begin{equation}\label{sln_brack}
\{\tau_\alpha,\tau_\beta\}= \sum_{p\in \alpha \cap \beta} \ve(p,\alpha,\beta) \left(\tau_{\alpha_p\beta_p}- \frac{\tau_{\alpha}\tau_{\beta}}{n}\right),
\end{equation}
where $\alpha,\beta$ are any smooth closed oriented loops in $F$ in general position.
(We identify closed oriented loops in $F$ with conjugacy classes in $\pi_1(F).$)
$\alpha \cap \beta$ is the set of the intersection points and
$\alpha_p\beta_p$ is the product of $\alpha$ and $\beta$ in $\pi_1(F,p),$
and $\ve(p,\alpha,\beta)$ is the sign of the intersection:

\centerline{\parbox{2in}{\psfig{figure=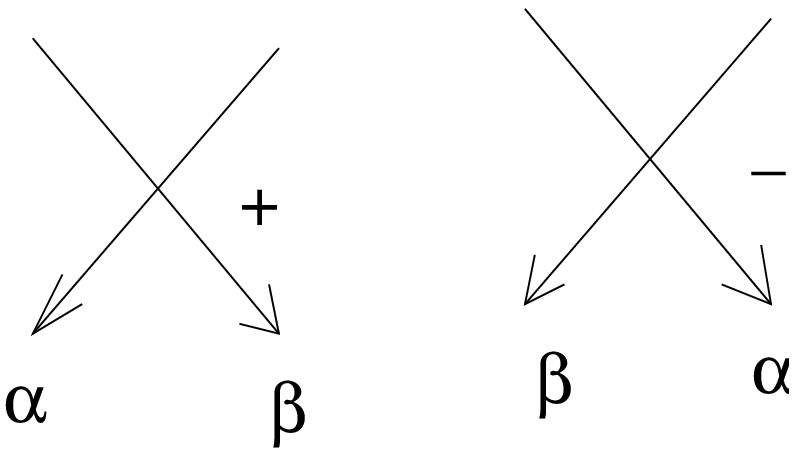,height=.7in}}}

(2) For $G=SO(n,\C), Sp(n,\C),$
\begin{equation}\label{spso_brack}
\{\tau_{\alpha},\tau_{\beta}\}= \sum_{p\in \alpha \cap \beta} \ve(p,\alpha,\beta)
\left(\tau_{\alpha_p\beta_p}-\tau_{\alpha_p\beta_p^{-1}}\right).
\end{equation}
\end{proposition}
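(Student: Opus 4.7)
The plan is to invoke Goldman's general bracket formula for invariant functions and specialize it to the trace function $\tau$ in the defining representation of each classical group. Goldman's theorem asserts that for any conjugation-invariant functions $f,g\colon G\to\C$ and any loops $\alpha,\beta$ on the surface in general position,
\[
\{f_\alpha,g_\beta\}(\rho)=\sum_{p\in\alpha\cap\beta}\ve(p,\alpha,\beta)\,B\bigl(F_f(\rho(\alpha_p)),\,F_g(\rho(\beta_p))\bigr),
\]
where $B$ is the invariant symmetric form on $\g$ appearing in (\ref{omega}) and the \emph{variation} $F_f(h)\in\g$ is characterized by $B(F_f(h),X)=\left.\tfrac{d}{dt}\right|_{t=0}f(\exp(tX)h)$ for every $X\in\g$. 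Taking $f=g=\tau$ reduces the proposition to (i) an algebraic identification of $F_\tau$ for each classical group and (ii) a short trace manipulation.

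Step 1: identify $F_\tau(h)$. For $\g=sl(n)$, orthogonal projection of $h$ onto the traceless subspace with respect to the trace form yields $F_\tau(h)=h-\tfrac{\tau_h}{n}I$, the unique element of $sl(n)$ satisfying $\mathrm{tr}(F_\tau(h)X)=\mathrm{tr}(hX)$ for all $X\in sl(n)$. For $\g=so(n)$ or $sp(2n)$ with defining bilinear form $J$, the Lie algebra is cut out by $X^TJ+JX=0$ and the corresponding projection gives $F_\tau(h)=\tfrac12(h-J^{-1}h^TJ)$; using $h^TJh=J$ for $h\in G$, this simplifies to $F_\tau(h)=\tfrac12(h-h^{-1})$.

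Step 2: substitute and simplify. For $sl(n)$ a cyclic-trace calculation gives
\[
B\bigl(F_\tau(\alpha_p),F_\tau(\beta_p)\bigr)=\mathrm{tr}\Bigl(\bigl(\alpha_p-\tfrac{\tau_\alpha}{n}I\bigr)\bigl(\beta_p-\tfrac{\tau_\beta}{n}I\bigr)\Bigr)=\tau_{\alpha_p\beta_p}-\tfrac{\tau_\alpha\tau_\beta}{n},
\]
yielding (\ref{sln_brack}). For $so(n)$ and $sp(2n)$ one obtains the four-term expression
\[
\tfrac14\bigl(\tau_{\alpha_p\beta_p}-\tau_{\alpha_p\beta_p^{-1}}-\tau_{\alpha_p^{-1}\beta_p}+\tau_{\alpha_p^{-1}\beta_p^{-1}}\bigr),
\]
and the identity that collapses this to (\ref{spso_brack}) is that for $g\in SO(n)\cup Sp(2n)$ the elements $g^{-1}$ and $g^T$ are conjugate via $J$, hence $\tau_{g^{-1}}=\tau_g$; together with cyclicity of trace this forces $\tau_{\alpha_p^{-1}\beta_p^{-1}}=\tau_{\alpha_p\beta_p}$ and $\tau_{\alpha_p^{-1}\beta_p}=\tau_{\alpha_p\beta_p^{-1}}$.

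The principal obstacle is bookkeeping of normalizations: the overall scalar multiple in front of each formula depends on the precise choice of invariant form $B$ pinning down the Goldman symplectic form $\omega$ on each group. In particular the factor $\tfrac12$ arising in $F_\tau(h)=\tfrac12(h-h^{-1})$ must be absorbed by selecting a suitable rescaling of $\mathrm{tr}(XY)$ as $B$ on $so(n)$ and $sp(2n)$, so that Step 2 produces exactly the clean coefficient in (\ref{spso_brack}). Once these normalizations are fixed consistently with the conventions implicit in (\ref{omega}), Goldman's theorem itself is classical and only the linear algebra above remains.
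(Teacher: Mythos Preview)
Your approach is correct and closely parallel to the paper's, though organized differently. The paper does not recompute the variation functions $F_\tau$; instead it quotes directly from \cite{Go2} the already-specialized Goldman bracket formulas for the real and imaginary parts $f_\alpha=\mathrm{Re}\,\tau_\alpha$ and $\tilde f_\alpha=\mathrm{Im}\,\tau_\alpha$ (adjusting for a factor-of-two discrepancy between Goldman's conventions and the paper's), and then recovers $\{\tau_\alpha,\tau_\beta\}$ by complex bilinearity via $\{\tau_\alpha,\tau_\beta\}=2\{f_\alpha,f_\beta\}+2i\{\tilde f_\alpha,f_\beta\}$. Your route is arguably more transparent: you invoke Goldman's master formula one level higher and redo the specialization yourself, which makes the origin of each term visible and avoids the real/imaginary bookkeeping. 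The paper's route is shorter since the specialization to each classical group has already been carried out in \cite{Go2}.

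One caution on your last paragraph: rescaling $B$ by a constant $c$ rescales $\omega$ and hence the Poisson bracket by $1/c$, but it also rescales $F_\tau$ by $1/c$ and thus the right-hand side of Goldman's identity by $1/c$ as well; Goldman's formula is scale-invariant in $B$, so rescaling cannot ``absorb'' a stray factor \emph{within} the identity. What rescaling $B$ does change is \emph{which} Poisson bracket you are computing. With $B(X,Y)=\mathrm{tr}(XY)$ on $so(n)$ and $sp(2n)$ your Step~2 genuinely yields $\tfrac12(\tau_{\alpha_p\beta_p}-\tau_{\alpha_p\beta_p^{-1}})$; the coefficient-free formula (\ref{spso_brack}) corresponds to the choice $B=\tfrac12\,\mathrm{tr}$ on those algebras. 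The paper sidesteps this by quoting Goldman's endpoint formulas rather than rederiving them, but the same normalization question is implicitly present there as well.
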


\begin{proof}
Let $f_\alpha=Re\, \tau_\alpha$ and $\tilde f_\alpha=Im\, \tau_\alpha.$
The formulas for Goldman bracket between $f_\alpha$'s and $\tilde f_\alpha$'s
appear in \cite{Go2}. Since Goldman's $B$ and $f_\alpha$'s are twice
ours, the formulas for $SL(n,\C)$ are
\begin{equation}\label{goldman1}
\{f_\alpha,f_\beta\}= \frac{1}{2}\sum_{p \in \alpha\cap \beta} \ve(p, \alpha,\beta)\left(f_{\alpha_p\beta_p}-\frac{\tilde f_\alpha \tilde f_\beta-f_\alpha f_\beta}{n}\right),
\end{equation}
\begin{equation}\label{goldman2}
\{f_\alpha,\tilde f_\beta\}=\{\tilde f_\alpha,f_\beta\}=
\frac{1}{2}\sum_{p \in \alpha\cap \beta} \ve(p, \alpha,\beta)
\left(\tilde f_{\alpha_p\beta_p}-\frac{\tilde f_\alpha f_\beta- f_\alpha \tilde f_\beta}{n}\right),
\end{equation}
$$\{\tilde f_\alpha,\tilde f_\beta\}=-\{f_\alpha,f_\beta\}.$$
Now (\ref{sln_brack}) follows from
\begin{equation}\label{expansion}
\{\tau_\alpha,\tau_\beta\}=\{f_\alpha+i\tilde f_\alpha,f_\beta+i \tilde f_\beta\}=
2\{f_\alpha,f_\beta\}+2i\{\tilde f_\alpha,f_\beta\}.
\end{equation}

Let $G$ be $SO(n,\C)$ or $Sp(2n,\C)$ now.
By \cite{Go2},
$$\{f_\alpha,f_\beta\}=
\left(f_{\alpha_p\beta_p}-f_{\alpha_p\beta_p^{-1}}\right).$$
(Since $so(n,\C)=o(n,\C),$ the above formula holds for the Goldman bracket for $SO(n,\C)$ as well.)
Furthermore, from \cite[Lemma 1.11]{Go2} and Goldman's product formula, \cite{Go2},
we obtain additional formulas
$$\{\tilde f_\alpha,f_\beta\}= \{f_\alpha,\tilde f_\beta\}=\sum_{p \in \alpha\cap \beta}
\ve(p, \alpha,\beta) \left(\tilde f_{\alpha_p\beta_p}- \tilde f_{\alpha_p\beta_p^{-1}}\right).$$
$$\{\tilde f_\alpha,\tilde f_\beta\}=-\{f_\alpha,f_\beta\}.$$
Now (\ref{spso_brack}) follows from (\ref{expansion}).
\end{proof}

In a torus $T$, the signed number of the intersection points of any two curves $(a,b)$,
$(c,d)$ in $T$ is $\left|\begin{array}{cc} a & b \\ c & d\\ \end{array}\right|$ and, therefore,
formulas (\ref{sln_brack}) and (\ref{spso_brack}) for Goldman bracket on $\C[X_G(\Z^2)]$ simplify to

\begin{equation}\label{sln_tor_brack}
\{\tau_{a,b},\tau_{c,d}\}= \left|\begin{array}{cc} a & b \\ c & d\\ \end{array}\right|
\left(\tau_{a+c,b+d}-\frac{\tau_{a,b}\tau_{c,d}}{n}\right),\ \text{for\ } G=SL(n,\C),
\end{equation}
and
\begin{equation}\label{spso_tor_brack}
\{\tau_{a,b},\tau_{c,d}\}= \left|\begin{array}{cc} a & b \\ c & d\\ \end{array}\right|
\left(\tau_{a+c,b+d}-\tau_{a-c,b-d}\right)\ \text{for}\ G=SO(n,\C), Sp(2n,\C).
\end{equation}

%%%%%%%%%%%%%%%%%%%%%%%%%%%%%%%%%%%%%%%%%%%%%%%%%%%%%%%%%%%%%%%%%%%%%%%%%%%%%%%%%%%

\section{Proof of Theorem \ref{AgW_quant}}
\label{s_poisson}

%%%%%%%%%%%%%%%%%%%%%%%%%%%%%%%%%%%%%%%%%%%%%%%%%%%%%%%%%%%%%%%%%%%%%%%%%%%%%%%%%%

In light of isomorphism (\ref{isoms}), it is enough to prove that Poisson bracket (\ref{Poisson})
on $\A_\g^W\otimes \C_0$, which we will denote here by $\{\cdot,\cdot\}_A,$ coincides with
Goldman bracket, (\ref{sln_tor_brack}) and (\ref{spso_tor_brack}), on $\C[X_G^0(\Z^2)]$ which we denote here by
$\{\ ,\ \}_G.$

Furthermore, since Poisson brackets are skew-commutative and they satisfy Leibniz' law:
$$\{f,gh\} = \{f,g\}h + g\{f,h\}$$
it is enough to prove that the above brackets coincide for algebra generators only.
Hence Theorem \ref{AgW_quant} follows from Proposition \ref{char_gen} and Propositions \ref{brack_sln} and \ref{brack_spso}.

\begin{proposition}\label{brack_sln}
For $G=SL(n,\C)$
$$\Theta\left(\{\tau_{a,b},\tau_{c,d}\}_A\right)=\{\Theta(\tau_{a,b}),\Theta(\tau_{c,d})\}_G,$$
for any $a,b,c,d\in \Z,$
where $\Theta$ is defined in (\ref{isoms}).
\end{proposition}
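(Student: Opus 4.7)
The plan is to compute explicitly both sides under the isomorphism $\Theta$ of (\ref{isoms}), and check that they agree. By Example \ref{example_sln}, the regular function $\tau_{a,b}$ on $X_{SL(n,\C)}^0(\Z^2) = \T^2/S_n$ sends $[(x,y)]$ to $\sum_{i=1}^n x_i^a y_i^b$, so tracing backwards through the chain of isomorphisms $\A_\g^W \otimes \C_0 \xrightarrow{\eta} \C[\Lambda_\g^2]^W \xrightarrow{L\otimes L} \C[\T^2]^W$ identifies the preimage of $\tau_{a,b}$ as the class of
$$T_{a,b} := \sum_{i=1}^n E_{a\alpha_i} Q_{b\alpha_i} \in \A_\g^W,$$
where $\alpha_1, \ldots, \alpha_n$ are the $sl(n)$-weights of the defining representation, as in Section \ref{ss_sln_unknot}.

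The main calculation is the quantum commutator $[T_{a,b}, T_{c,d}]$. Applying $Q_\beta E_\alpha = q^{-(\alpha,\beta)} E_\alpha Q_\beta$ termwise inside the double sum (and relabeling $i\leftrightarrow j$ in the second product) yields
$$[T_{a,b}, T_{c,d}] = \sum_{i,j}\bigl(q^{-bc(\alpha_i,\alpha_j)} - q^{-ad(\alpha_i,\alpha_j)}\bigr)\, E_{a\alpha_i + c\alpha_j}\, Q_{b\alpha_i + d\alpha_j}.$$
Writing $q = e^h$, the scalar factor expands as $(ad - bc)(\alpha_i, \alpha_j)\, h + O(h^2)$, so the definition (\ref{Poisson}) of the quantization bracket gives
$$\{\tau_{a,b}, \tau_{c,d}\}_A = (ad - bc)\sum_{i,j} (\alpha_i, \alpha_j)\,\Theta\bigl(E_{a\alpha_i + c\alpha_j} Q_{b\alpha_i + d\alpha_j}\bigr).$$

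It remains to recognize the right-hand side. At $q = 1$, the operator $E_{a\alpha_i + c\alpha_j} Q_{b\alpha_i + d\alpha_j}$ corresponds under $\Theta$ to the monomial $(x,y) \mapsto x_i^a x_j^c y_i^b y_j^d$ on $\T^2$. Plugging in the Killing form values (\ref{killing_sln}), the diagonal part ($i = j$) contributes $\frac{n-1}{n}\tau_{a+c, b+d}$, while the off-diagonal part assembles into $-\frac{1}{n}\bigl(\tau_{a,b}\tau_{c,d} - \tau_{a+c,b+d}\bigr)$. Adding these gives exactly $\tau_{a+c,b+d} - \frac{1}{n}\tau_{a,b}\tau_{c,d}$, so
$$\{\tau_{a,b},\tau_{c,d}\}_A = (ad - bc)\Bigl(\tau_{a+c,b+d} - \tfrac{1}{n}\tau_{a,b}\tau_{c,d}\Bigr),$$
matching (\ref{sln_tor_brack}). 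The only real subtlety is pinning down the correct lift $T_{a,b}$ under the composite isomorphism $\Theta$; once that is in place, everything else is a direct computation combining the $q$-Weyl relations with the precise form of the $sl(n)$ Killing form. The fact that the Killing form appears both as the structure constants of $\A_\g$ and (implicitly, via Goldman's choice of trace-form $B$) as the parameter controlling the symplectic form on $X_G(F)$ is what makes the two brackets match on the nose.
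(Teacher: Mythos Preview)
Your proof is correct and follows essentially the same route as the paper's: identify the lift $T_{a,b}=\sum_i E_{a\alpha_i}Q_{b\alpha_i}$ of $\tau_{a,b}$ under $\Theta$, compute the commutator via the $q$-Weyl relations, expand to first order in $h$, split into diagonal and off-diagonal terms using the explicit Killing form values (\ref{killing_sln}), and recombine to recover the Goldman formula (\ref{sln_tor_brack}). Your write-up is in fact somewhat cleaner than the paper's, which leaves the ``$1+$'' in the expansion implicit and has a missing parenthesis in the final display.
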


\begin{proof}
Let $\alpha_i:h\to \C,$ $i=1,...,n$ be the weights defined in Section \ref{ss_sln_unknot}.
Denote $E_{\alpha_i}$ and $Q_{\alpha_i}$ by $E_i$ and $Q_i$ respectively.
Then
\begin{equation}\label{theta_tau}
\Theta\left(\sum_{i=1}^n E_i^a Q_i^b\right)=\tau_{a,b}.
\end{equation}
Hence, by (\ref{Poisson}), $\{\Theta^{-1}\tau_{a,b}, \Theta^{-1}\tau_{c,d}\}_A$
is the image of
\begin{equation}\label{a_brack}
\frac{1}{h} \left(\sum_{i=1}^n E_i^aQ_i^b\cdot \sum_{i=1}^n E_i^cQ_i^d
-\sum_{i=1}^n E_i^cQ_i^d\cdot \sum_{i=1}^n E_i^aQ_i^b \right)
\end{equation}
in $\A_{sl(n,\C)}^W\otimes \C_0.$
By (\ref{qWeyl_rel}) and (\ref{killing_sln}),
$$\sum_{i=1}^n E_i^aQ_i^b\cdot \sum_{i=1}^n E_i^cQ_i^d=
\sum_{i,j} E_i^aE_j^cQ_i^bQ_j^d\cdot \begin{cases} \frac{1}{n}hbc & \text{for $i\ne j$}\\
(\frac{1}{n}-1)hbc & \text{for $i=j$} \end{cases}\quad \text{mod}\ h^2$$
for $q=e^h.$
Hence (\ref{a_brack}) equals
$$\sum_i \left(\frac{1}{n}-1\right)(bc-ad)E_i^{a+c}Q_i^{b+d}+\sum_{i\ne j}
\frac{1}{n}(bc-ad)E_i^aE_j^cQ_i^bQ_j^d=$$
$$\left|\begin{array}{cc} a & b \\ c & d\\ \end{array}\right|\cdot
\sum_i E_i^{a+c}Q_i^{b+d}- \frac{1}{n}\sum_{i,j} E_i^aE_j^cQ_i^bQ_j^d$$
and, by (\ref{theta_tau}), it equals to
$$\left|\begin{array}{cc} a & b \\ c & d\\ \end{array}\right|\cdot\left(
\Theta^{-1}\tau_{a+c,b+d}-\frac{\Theta^{-1} \tau_{a,b}\cdot \Theta^{-1}\tau_{a,b}}{n}\right).$$
Now the statement follows from (\ref{sln_tor_brack}).
\end{proof}

\begin{proposition}\label{brack_spso} For $G=SO(n,\C),Sp(2n,\C),$
$$\Theta\left(\{\tau_{a,b},\tau_{c,d}\}_A\right)=
\{\Theta(\tau_{a,b}),\Theta(\tau_{c,d})\}_G,$$
for any $a,b,c,d\in \Z.$
\end{proposition}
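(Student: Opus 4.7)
The plan is to parallel the proof of Proposition \ref{brack_sln}. For each classical group $G \in \{Sp(2n,\C), SO(2n,\C), SO(2n+1,\C)\}$, let $\alpha_1, \ldots, \alpha_n \in \Lambda_\g$ be the standard orthogonal basis of the weight lattice so that the weights of the defining representation $V$ of $G$ are $\pm\alpha_1, \ldots, \pm\alpha_n$, together with the extra zero weight when $G = SO(2n+1,\C)$. Under the normalization $(\lambda, \lambda) = 2$ on short roots, one has $(\alpha_i, \alpha_j) = c_\g \delta_{ij}$, where $c_\g = 1$ for $C_n$ and $D_n$ and $c_\g = 2$ for $B_n$. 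Writing $E_i = E_{\alpha_i}$ and $Q_i = Q_{\alpha_i}$ and evaluating the character $\tau_{a,b}([\rho]) = \mathrm{tr}\,\rho(L^a M^b)$ on the image of $\T^2/W \simeq X_G^0(\Z^2)$ guaranteed by Theorem \ref{char_var}, I would obtain
$$\Theta^{-1}(\tau_{a,b}) = \sum_{i=1}^n \bigl(E_i^a Q_i^b + E_i^{-a} Q_i^{-b}\bigr) + \delta_\g,$$
with $\delta_\g = 1$ in the odd orthogonal case and $\delta_\g = 0$ otherwise.

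Since the $\alpha_i$ are orthogonal, relation (\ref{qWeyl_rel}) forces $[E_i^a Q_i^b, E_j^c Q_j^d] = 0$ for $i \ne j$, and the constant $\delta_\g$ is central, so formula (\ref{Poisson}) reduces the bracket to a diagonal sum
$$\{\Theta^{-1}\tau_{a,b}, \Theta^{-1}\tau_{c,d}\}_A = \sum_{i=1}^n \{M_i(a,b), M_i(c,d)\}_A, \qquad M_i(a,b) := E_i^a Q_i^b + E_i^{-a} Q_i^{-b}.$$
The identity $E_i^\alpha Q_i^\beta \cdot E_i^\gamma Q_i^\delta = q^{-\beta\gamma (\alpha_i,\alpha_i)} E_i^{\alpha+\gamma} Q_i^{\beta+\delta}$ together with $q = e^h$ and definition (\ref{Poisson}) then yields, for all integers $\alpha,\beta,\gamma,\delta$,
$$\{E_i^\alpha Q_i^\beta, E_i^\gamma Q_i^\delta\}_A \equiv (\alpha_i,\alpha_i)(\alpha\delta - \beta\gamma)\, E_i^{\alpha+\gamma} Q_i^{\beta+\delta} \pmod{h}.$$
Expanding $\{M_i(a,b), M_i(c,d)\}_A$ over the four sign combinations, the two "same-sign" pairs contribute $(\alpha_i,\alpha_i)(ad-bc)\, M_i(a+c, b+d)$, while the two "opposite-sign" pairs contribute $-(\alpha_i,\alpha_i)(ad-bc)\, M_i(a-c, b-d)$, giving
$$\{M_i(a,b), M_i(c,d)\}_A = (\alpha_i,\alpha_i)(ad-bc)\bigl(M_i(a+c,b+d) - M_i(a-c,b-d)\bigr).$$
Summing over $i$ and applying $\Theta$ produces $c_\g (ad-bc)\bigl(\tau_{a+c,b+d} - \tau_{a-c,b-d}\bigr)$, which is $c_\g$ times the right-hand side of (\ref{spso_tor_brack}).

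The main obstacle is reconciling the overall normalization constant $c_\g$. The dual Killing form used on the quantum side (with short roots squared length $2$) need not coincide with the dual of the trace form $B(X,Y) = \mathrm{tr}\, XY$ invoked in Proposition \ref{G-brack} on the Goldman side. For $C_n$ and $D_n$ the constant $c_\g = 1$, so the two brackets agree on the nose. For $B_n$ the factor $c_\g = 2$ must be absorbed by a compatible rescaling of $B$ group-by-group, or by verifying that Goldman's derivation in \cite{Go2} naturally uses the paper's normalization rather than the trace form. Checking this compatibility is the only step that deviates from the $SL(n,\C)$ template; once it is handled, the rest is a direct translation of the computation of Proposition \ref{brack_sln}.
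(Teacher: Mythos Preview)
Your approach is exactly the paper's: identify $\Theta^{-1}(\tau_{a,b})$ as a symmetric sum $\sum_i(E_i^aQ_i^b+E_i^{-a}Q_i^{-b})$, use orthogonality of the $\alpha_i$ to reduce to a diagonal commutator computation, and match against (\ref{spso_tor_brack}). The structure and the key steps are identical.

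Two points where you are \emph{more} careful than the paper. First, you include the constant $\delta_\g=1$ from the zero weight of the defining representation of $SO(2n+1,\C)$; the paper's formula for $\Theta^{-1}(\tau_{a,b})$ simply omits it. Since the constant is central this does not affect the bracket, but your version is the correct expression for $\tau_{a,b}$. Second, you flag the normalization constant $c_\g$ and note that under the convention ``short roots have squared length $2$'' one gets $(\alpha_i,\alpha_i)=2$ for $B_n$. The paper, by contrast, asserts $(\alpha_i,\alpha_j)=\delta_{ij}$ uniformly for $sp(2n)$, $so(2n)$, $so(2n+1)$ (equation (\ref{sp_kill})) and proceeds without further comment. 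So the ``obstacle'' you leave open is not a defect of your argument relative to the paper's; rather, you have noticed a genuine tension between the stated normalization in \S1.1 and the value used in the proof, which the paper does not address. Resolving it does require matching the form on the quantum side to the form $B$ defining Goldman's $\omega$, exactly as you say.
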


\begin{proof}

Let $s_n$ be the $n\times n$ matrix $$s_n=\left(\begin{array}{ccccc} 0 & ... & 0 & 1\\
0 & ... & 1 & 0\\
: & ... & : & :\\
1 & ... & 0 & 0\\ \end{array}\right).$$
Define matrices
$$B_m=\left(\begin{array}{cc} 0 & s_n\\ s_n & 0 \end{array}\right) y,\ \text{for}\
m=2n,\quad
B_m(x,y)=\left(\begin{array}{ccc} 0 & s_n & 0\\ s_n & 0 & 0\\
0 & 0 & 1\\ \end{array}\right),\ \text{for}\
m=2n+1,$$
$$\text{and}\quad \Omega_{2n}=\left(\begin{array}{cc} 0 & -s_n\\ s_n & 0 \end{array}\right).$$
Then $Sp(2n)\subset SL(m,\C)$ for $m=2n$ and $SO(m,\C)\subset SL(m,\C)$ are the groups of
isomorphisms of $\C^m$ preserving bilnear forms $(x,y)\to x^T\Omega_my$ and $(x,y)\to x^TB_my$, respectively.
The advantage of this definition of $SO(n,\C)$ over the "standard" one,
$SO(n,\C)=\{A: A\cdot A^T=I_n,\ det(A)=1\},$
 is that the intersection of the group of
diagonal matrices in $GL(n,\C)$ with $SO(n,\C)$ (defined above) is a maximal torus in $SO(n,\C).$
The Lie algebras $so(m)$ and $sp(m)$ for $m$ even are spaces of matrices $X$ such that
$B_mX+XB_m=0$ and $\Omega_m X+X\Omega_m=0$ respectively.
If $\g=sp(2n),so(2n),so(2n+1),$ then $H_i=E_{ii}-E_{n+i,n+i},$ for $i=1,...,n,$ form a basis of
its Cartan subalgebra $\h$.  Furthermore, the vectors $H_1,...,H_n$ are of equal length
and are mutually orthogonal with respect of the Killing form.

The weight lattice is generated by weights $\alpha_1,...,\alpha_n$ dual to $H_1,..,H_n,$
$\alpha_i(H_j)=\delta_{ij},$ and the dual Killing form on $h^*$ is given by
\begin{equation}\label{sp_kill}
(\alpha_i,\alpha_j)=\delta_{ij}.
\end{equation}

The Weyl group is composed all signed permutations $W=(\Z/2)^n \rtimes S_n $
for $sp(2n)$ and $so(2n+1)$ and it is the subgroup of $(\Z/2)^n \rtimes S_n $ composed of signed
permutations with an even number of sign changes, $(\Z/2)^{n-1}\rtimes S_n,$ for $so(2n).$

Let $E_i=E_{\alpha_i}$ and $Q_i=Q_{\alpha_i}$ for $i=1,...,n.$
We have
$$\Theta^{-1}(\tau_{a,b})=\sum_{i=1}^n (E_i^a Q_i^b+ E_i^{-a} Q_i^{-b}).$$
By (\ref{qWeyl_rel}) and (\ref{sp_kill}),
$$E_i^aQ_i^b\cdot E_i^cQ_i^d=
-hbc E_i^{a+c}Q_i^{b+d}\quad \text{mod}\ h^2,$$
for $q=e^h.$ Therefore, $\Theta^{-1}(\tau_{a,b})\cdot \Theta^{-1}(\tau_{c,d})=$
$$-hbc \sum_{i=1}^n E_i^{a+c}Q_i^{b+d} + E_i^{-a-c}Q_i^{-b-d}
-E_i^{a-c}Q_i^{b-d} -E_i^{-a+c}Q_i^{-b+d}\ \text{mod\ } h^2.$$
By (\ref{Poisson}),
$$\{\tau_{a,b},\tau_{c,d}\}_A=
\left|\begin{array}{cc} a & b \\ c & d\\ \end{array}\right|\cdot
\left(\tau_{a+c,b+d}-\tau_{a-c,b-d}\right).$$
Now the statement follows from  (\ref{spso_tor_brack}).
\end{proof}

\vspace*{.2in}\ \\
\centerline{Dept. of Mathematics, 244 Math. Bldg.}
\centerline{University at Buffalo, SUNY}
\centerline{Buffalo, NY 14260, USA}
\centerline{asikora@buffalo.edu}

\end{document}